\renewcommand*{\backref}[1]{}  
   \renewcommand*{\backrefalt}[4]{
      \ifcase #1 
         Not cited.
      \or
         Cited on page #2.
      \else
         Cited on pages #2.
      \fi}
\newtheorem*{corollary*}{Corollary}
\newtheorem{theorem}{Theorem}[section]
\newtheorem{lemma}[theorem]{Lemma}
\newtheorem{proposition}[theorem]{Proposition}
\newtheorem*{claim*}{Claim}
\theoremstyle{definition}
\newtheorem{definition}[theorem]{Definition}
\newtheorem{main conjecture}[theorem]{Main Conjecture}
\newtheorem*{theorem }{Theorem}
\theoremstyle{remark}
\numberwithin{equation}{section}
\renewcommand*\env@matrix[1][\
arraystretch]{%
  \edef\arraystretch{#1}%
  \hskip -\arraycolsep
  \let\@ifnextchar\new@ifnextchar
  \array{*\c@MaxMatrixCols c}}
\renewcommand{\mod}{\operatorname{mod}}
\newcommand{\Ext}{\operatorname{Ext}}
\newcommand{\End}{\operatorname{End}}
\newcommand{\Hom}{\operatorname{Hom}}
\newcommand{\add}{\operatorname{\mathrm{add}}}
\newcommand{\op}{\operatorname{\mathrm{op}}}
\newcommand{\id}{\operatorname{\mathrm{id}}}
\renewcommand{\mod}{\operatorname{mod}}
\newcommand{\Ab}{{\mathcal{A}b}}
\newcommand{\Z}{\mathbb{Z}}
\newcommand{\C}{\mathcal{C}}
\newcommand{\CC}{\mathcal{C}}
\newcommand{\Ker}{\mathrm{Ker}}
\newcommand{\Db}{\mathrm{D}^{\mathrm{b}}}
\newcommand{\domdim}{\operatorname{domdim}}
\newcommand{\gldim}{\operatorname{gldim}}
\begin{document}

\title[{Total preprojective algebras}]{Total preprojective algebras}
\date{\today}

\subjclass[2010]{16G10}

\keywords{preprojective algebra, cluster tilting module, tensor algebra, Auslander algebra}

\author[Chan]{Aaron Chan}
\address[Chan]{Graduate School of Mathematics, Nagoya University, Furocho, Chikusaku, Nagoya 464-8602, Japan}
\email{aaron.kychan@gmail.com}

\author[Iyama]{Osamu Iyama}
\address[Iyama]{Graduate School of Mathematical Sciences, University of Tokyo, 3-8-1 Komaba Meguro-ku Tokyo 153-8914, Japan}
\email{iyama@ms.u-tokyo.ac.jp}

\author[Marczinzik]{Ren\'{e} Marczinzik}
\address[Marczinzik]{Mathematical Institute of the University of Bonn, Endenicher Allee 60, 53115 Bonn, Germany}
\email{marczire@math.uni-bonn.de}

\dedicatory{Dedicated to the 80th birthday of Claus Michael Ringel}

\begin{abstract}
We introduce total preprojective algebras $\Psi$ of path algebras of Dynkin quivers $kQ$, and prove that they are isomorphic to $2$-Auslander algebras of preprojective algebras $\Pi$ of $kQ$. In particular, $\Psi$ has global dimension $3$ and dominant dimension $3$. We also describe $\Psi$ as a tensor algebra of a certain explicit bimodule over the Auslander algebra of $kQ$. As an application, we give a presentation of $\Psi$ by explicit quivers with relations. More generally, we introduce total $(d+1)$-preprojective algebras of $d$-representation finite algebras, and give all the corresponding results.
\end{abstract}

\maketitle
 
\section{Our results}


Preprojective algebras are now classical objects of study in representation theory with many applications and connections to other areas, such as Cohen--Macaulay modules \cite{Aus86, GL91}\cite[Chapter 5]{Sko}, Kleinian singularities \cite{Cra00,CBH}, cluster algebras \cite{GLS13}, quantum groups \cite{KS97, Lus91}, and quiver varieties \cite{Nak94}.
Recently they were used in \cite{CIM} to give the first non-trivial classes of Cohen-Macaulay algebras in the sense of Auslander-Reiten \cite{AR}. 
Also, other variations of preprojective algebras including Calabi-Yau completions \cite{K} and generalized preprojective algebras \cite{GLS3} have been actively studied.

In this paper, all modules are right modules and the composition of morphisms $f:X\to Y$ and $g:Y\to Z$ is denoted by $g\circ f:X\to Z$. Thus $X$ is an $\End_A(X)^{\op}$-module. The composition of arrows $a:i\to j$ and $b:j\to k$ in a quiver is denoted by $ba$.

We start with the combinatorial definition of preprojective algebras. For a graph $\Delta$, we fix an orientation to obtain a quiver $Q$. Then define the double $\overline{Q}$  by adding a new arrow $a^*:j\to i$ for each arrow $a:i\to j$ in $Q$. The \emph{preprojective algebra} of $\Delta$ is defined by
\[\Pi=\Pi(\Delta):=k\overline{Q}/\langle\sum_{a\in Q_1}(a^*a-aa^*)\rangle.\]
Then $\Pi$ does not depend on a choice of the orientation of $\Delta$.
There are two alternative definitions of $\Pi$ that are more homological and important for the aim of this article. We give the second one in a slightly more general form.
Let $H$ be a hereditary algebra, e.g. the path algebra $H=kQ$ of an acyclic quiver $Q$. The \emph{inverse Auslander-Reiten translation} is the functor
\[\tau^-:=\Ext^1_H(DH,-):\mod H\to\mod H.\]
Now we introduce the following class of algebras.

\begin{definition}
For $X\in\mod H$, we define the \emph{$X$-preprojective algebra} as 
\[\Psi_H^X:=\bigoplus_{i\ge0}(\Psi_H^X)_i\ \mbox{ with }\ (\Psi_H^X)_i:=\Hom_H(X,\tau^{-i}(X)),\]
where the multiplication is defined by $g\cdot f:=\tau^{-i}(g)f$ for $f\in(\Psi_H^X)_i$ and $g\in(\Psi_H^X)_j$.
\end{definition}

By definition, $\Psi_H^X$ has a canonical structure of a $\Z$-graded algebra.
Then the \emph{preprojective algebra} of $H$ is defined as
\[\Pi=\Pi(H):=\Psi_H^H.\] 
The third equivalent definition of the preprojective algebra of $H$ is given by the tensor algebra of the $H$-bimodule $\Ext_H^1(D(H),H)$ over $H$.
Equivalence of these three definitions was first proven by Ringel \cite{R}, see also \cite{S} for a modern proof.

It follows by Gabriel's classical Theorem that $\dim_k\Pi$ is finite if and only if $H$ is of Dynkin type, and this is also equivalent to that there are only finitely many isoclasses of indecomposable $H$-modules.
In this case, $\Pi$ regarded as an $H$-module is the direct sum of all indecomposable $H$-modules.
The first aim of this paper is to study the $X$-preprojective algebra $\Psi_H^X$ for the maximum choice $X:=\Pi$ of $X$. 

\begin{definition}
The \emph{total preprojective algebra} of a representation-finite hereditary algebra $H$ is defined as
\[\Psi=\Psi(H):=\Psi_H^{\Pi}.\]
\end{definition}

If $H$ is of Dynkin type, then $\Pi$ is an additive generator of $\mod H$. Thus for each $X\in\mod H$, there exists an idempotent $e$ of $\Psi$ such that $\Psi_H^X$ is Morita equivalent to $e\Psi e$, see Proposition \ref{Moritaequpreproj}.

Our first main result gives an alternative description of the algebra $\Psi$. It is in fact isomorphic to the endomorphism algebra of a so-called 2-cluster tilting $\Pi$-module. Recall that, for a finite dimensional algebra $\Lambda$ and a positive integer $d$, we call $M\in \mod \Lambda$ \emph{$d$-cluster tilting} if
\begin{align*}
\add M&=\{X\in\mod \Lambda\mid\forall 1\le i\le d-1\ \Ext^i_\Lambda(M,X)=0\}\\
&=\{X\in\mod \Lambda\mid\forall 1\le i\le d-1\ \Ext^i_\Lambda(X,M)=0\}.
\end{align*}
This notion plays an important role in higher dimensional Auslander-Reiten theory \cite{I} as well as categorification of Fomin-Zelevinsky cluster algebras. In their study of the coordinate ring of unipotent subgroups of complex semisimple Lie groups, Geiss-Leclerc-Schr\"oer \cite{GLS} proved that the preprojective algebra $\Pi$ of a Dynkin quiver always admits a 2-cluster tilting module. It follows from \cite{IO,Y} that such a 2-cluster tilting $\Pi$-module can described as $\Pi\otimes_H\Pi$, we refer to the preliminaries for more information. One of our main results of this paper is the following.

\begin{theorem}\label{main kQ}
Let $H$ be a hereditary algebra of Dynkin type, and $\Pi$ the preprojective algebra of $H$.
\begin{enumerate}[\rm(a)]
\item We have an isomorphism of algebras
\[\Psi\simeq\End_\Pi(\Pi\otimes_H\Pi).\]
\item $\Psi$ is a 2-Auslander algebra, that is, $\gldim \Psi\le3\le\domdim \Psi$ holds.
\item Let $\Gamma:=\End_H(\Pi)$ be the Auslander algebra of $H$. Then $\Psi$ is isomorphic to the tensor algebra 
\[\Psi\simeq T_\Gamma\Hom_H(\Pi,\Pi\otimes_H\Pi_1).\]
\end{enumerate}
\end{theorem}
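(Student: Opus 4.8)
The plan is to prove the three statements together, using the fact that all the algebras involved are endomorphism rings of explicit modules over $\Pi$, and that $\tau^-$ on $\mod H$ becomes a concrete functor after identifying $H$-modules with $\Pi$-modules. The first step is to recall (or set up, in the preliminaries referenced above) the standard identifications: an $H$-module $Y$ gives a $\Pi$-module $Y\otimes_H\Pi$, and conversely $\Pi\otimes_H\Pi$ is a $2$-cluster tilting $\Pi$-module whose indecomposable summands are exactly the $N\otimes_H\Pi$ with $N$ running over indecomposable $H$-modules; this is the content of \cite{GLS,IO,Y} cited before the theorem. Under this identification I expect to have a natural isomorphism $\Hom_\Pi(X\otimes_H\Pi,\,\tau^{-}(Y)\otimes_H\Pi)\cong\Hom_H(X,\tau^{-}Y)$, or more precisely an isomorphism realizing the shift functor $-\otimes_H\Pi_1$ on the category of $\Pi$-modules as the operation induced by $\tau^-$; this is essentially the statement that $\Pi$ as an $H$-module decomposes along powers of $\tau^-$, which is already recalled in the excerpt (``$\Pi$ regarded as an $H$-module is the direct sum of all indecomposable $H$-modules'').

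For part (a), I would assemble the graded pieces $(\Psi)_i=\Hom_H(\Pi,\tau^{-i}\Pi)$ and identify $\bigoplus_{i\ge0}\Hom_H(\Pi,\tau^{-i}\Pi)$ with $\Hom_\Pi\bigl(\Pi\otimes_H\Pi,\ \bigoplus_{i\ge0}(\tau^{-i}\Pi)\otimes_H\Pi\bigr)$; the key point is that $\bigoplus_{i\ge0}(\tau^{-i}\Pi)\otimes_H\Pi$ is precisely $\Pi\otimes_H\Pi$ again (each indecomposable $H$-module appears once among the $\tau^{-i}$ of indecomposable summands of $\Pi$, since $\Pi$ is the additive generator), so the sum over $i$ collapses. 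One then checks the multiplication $g\cdot f=\tau^{-i}(g)f$ matches composition in $\End_\Pi(\Pi\otimes_H\Pi)$; this is where the definition of the multiplication on $\Psi_H^X$ was tailored precisely so that the identification becomes an algebra map, so the verification should be bookkeeping once the functorial isomorphism in the previous paragraph is in place. Part (b) then follows from part (a) together with the already-cited facts that $\Pi\otimes_H\Pi$ is a $2$-cluster tilting $\Pi$-module and that endomorphism algebras of $d$-cluster tilting modules are $d$-Auslander algebras (i.e. satisfy $\gldim\le d+1\le\domdim$) by \cite{I}; here one also uses $\gldim\Pi\le 2$ when $H$ is non-semisimple Dynkin, which feeds into the global dimension bound.

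For part (c), having identified $\Psi=\End_\Pi(\Pi\otimes_H\Pi)$, I would observe that $\Psi$ is $\Z_{\ge0}$-graded with degree-zero part $(\Psi)_0=\End_H(\Pi)=\Gamma$ and degree-one part $(\Psi)_1=\Hom_H(\Pi,\tau^-\Pi)$, which under the above identification is the $\Gamma$-bimodule $\Hom_H(\Pi,\Pi\otimes_H\Pi_1)$ (here $\Pi_1$ is the degree-$1$ part of $\Pi$, i.e. the bimodule $\Ext^1_H(DH,H)$, so that $-\otimes_H\Pi_1$ implements $\tau^-$ on the module side). It then suffices to check that the canonical graded algebra map from the tensor algebra $T_\Gamma\Hom_H(\Pi,\Pi\otimes_H\Pi_1)\to\Psi$ is an isomorphism, i.e. that $\Psi$ is generated in degrees $0$ and $1$ and that there are no relations beyond those forced by the bimodule structure. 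Generation in degree $\le1$ is the statement that $(\Psi)_{i+1}=(\Psi)_i\cdot(\Psi)_1$, which should follow from the corresponding surjectivity on the $H$-module side; the freeness (absence of relations) should follow from the fact that $\Pi\otimes_H\Pi$, as a module over the hereditary-like situation, has its higher graded pieces freely built up, or alternatively from comparing dimensions using that both sides have the same Hilbert series over $\Gamma$.

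The main obstacle I anticipate is the third paragraph's freeness claim: showing that the multiplication map $(\Psi)_1^{\otimes_\Gamma i}\to(\Psi)_i$ is bijective, not merely surjective. Surjectivity (generation in degree one) should be reasonably direct from the structure of $\tau^-$ and the AR quiver, but injectivity — equivalently, that $\Psi$ has no ``extra'' relations — is the delicate part, and I expect it to require either an explicit dimension count using the combinatorics of the Dynkin type and the known structure of $\Pi\otimes_H\Pi$ as a graded $\Pi$-module, or a homological argument exploiting $\gldim\Psi\le 3$ and the domdim bound from part (b) to pin down the minimal projective presentation of $\Psi/(\Psi)_{\ge1}=\Gamma$ over $\Psi$. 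Once this is settled, the presentation by quivers with relations (the ``application'' promised in the abstract) is extracted from the bimodule $\Hom_H(\Pi,\Pi\otimes_H\Pi_1)$ by writing down its structure constants in the basis of paths in the AR quiver.
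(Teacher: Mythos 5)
Your outline of (b) matches the paper, but there are two problems. First, in (a) the identification you propose is wrong as stated: the adjunction gives $\Hom_\Pi(X\otimes_H\Pi,Z)\cong\Hom_H(X,Z|_H)$, so $\Hom_\Pi(X\otimes_H\Pi,\tau^-(Y)\otimes_H\Pi)$ is \emph{not} $\Hom_H(X,\tau^-Y)$ (the right-hand side picks up all graded pieces of $\tau^-(Y)\otimes_H\Pi$), and $\bigoplus_{i\ge0}(\tau^{-i}\Pi)\otimes_H\Pi$ is not $\Pi\otimes_H\Pi$: since $\tau^{-i}\Pi\cong\bigoplus_{j\ge i}\tau^{-j}(H)$, the summand $\tau^{-j}(H)$ occurs $j+1$ times, so nothing ``collapses'' in the way you claim. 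The correct mechanism runs the other way: $\End_\Pi(\Pi\otimes_H\Pi)\cong\Hom_H(\Pi,\Pi\otimes_H\Pi)=\bigoplus_{i\ge0}\Hom_H(\Pi,\Pi\otimes_H\Pi_i)=\bigoplus_{i\ge0}\Hom_H(\Pi,\tau^{-i}\Pi)$, i.e.\ one decomposes the \emph{target} by its grading; this is exactly how the paper proves (a), in the generality of tensor algebras $T=T_\Lambda(M)$ with $M=\Ext^1_H(DH,H)$, together with an explicit diagram verifying multiplicativity. Your version is fixable, but as written the key isomorphism is false. A minor further slip: in (b) you invoke $\gldim\Pi\le2$, which is both false (the preprojective algebra of Dynkin type is self-injective of infinite global dimension) and unneeded, since $\gldim\Psi\le3\le\domdim\Psi$ follows purely from the higher Auslander correspondence applied to the $2$-cluster tilting module $\Pi\otimes_H\Pi$.

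The genuine gap is in (c): you explicitly leave open the bijectivity of $\Psi_1^{\otimes_\Gamma i}\to\Psi_i$, offering only speculative routes (a dimension count, or a homological argument via the bounds of (b)), neither of which is carried out and neither of which is obviously workable. The paper closes this with a short Yoneda-type argument that you are missing: since $\Pi\otimes_H\Pi_1\cong\tau^-(\Pi)$ lies in $\add\Pi$ as an $H$-module, for the additive functor $F=\Hom_H(\Pi,-\otimes_H\Pi_i)$ on $\add\Pi$ the canonical map
\[\Hom_H(\Pi,\Pi\otimes_H\Pi_1)\otimes_{\Gamma}\Hom_H(\Pi,\Pi\otimes_H\Pi_i)\longrightarrow\Hom_H(\Pi,\Pi\otimes_H\Pi_{i+1})\]
is an isomorphism (evaluation of an additive functor on an object of $\add\Pi$ against the representable $\Hom_H(\Pi,-)$), which yields generation in degree one and freeness simultaneously, with no Hilbert-series or dominant-dimension input. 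Without this (or some completed substitute), your proof of (c) is incomplete.
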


We illustrate Theorem \ref{main kQ} with the following figure.
\[\begin{tikzpicture}[every text node part/.style={align=center}]
\tikzset{
    equ/.style={-,double equal sign distance},
  }
\node[draw] (vAusA) at (0,3.5) {$\Gamma$: Auslander algebra of $kQ$};
\node[draw] (vPsi) at (4,2) {$\Psi$: total preprojective algebra of $kQ$};
\node[draw] (vT) at (0,0.5) {Tensor algebra of \\$\Gamma$-bimodule $\Hom_{kQ}(\Pi,\Pi\otimes_\Lambda\Pi_1)$};
\node[draw] (vAusPi) at (8,0.5) {$2$-Auslander algebra of $\Pi$};
\node[draw] (vPi) at (8,3.5) {$\Pi$: preprojective algebra of $kQ$};
\node[draw] (v1) at (4,5) {$H=kQ$: path algebra of Dynkin quiver};
\draw  (v1) edge[->] (vPi);
\draw  (vPi) edge[->] (vAusPi);
\draw  (v1) edge[->] (vPsi);
\draw  (vPsi) edge[equ] (vAusPi);
\draw  (vAusPi) edge[equ] (vT);
\draw  (vPsi) edge[equ] (vT);
\draw  (vAusA) edge[->] (vT);
\draw  (v1) edge[->] (vAusA);
\end{tikzpicture}\]

In fact, this is a special case of our more general result as follows.
Let $d$ be a positive integer, and $\Lambda$ a finite dimensional algebra with $\gldim \Lambda\le d$. The \emph{inverse $d$-Auslander-Reiten translation} is the functor
\[\tau_d^-:=\Ext^d_\Lambda(D\Lambda,-):\mod \Lambda\to\mod \Lambda.\]
We refer to, for example, \cite{I} for more on higher Auslander-Reiten theory.

\begin{definition}
For $X\in\mod \Lambda$, we define the \emph{$X$-$(d+1)$-preprojective algebra} as 
\[\Psi_\Lambda^X:=\bigoplus_{i\ge0}(\Psi_\Lambda^X)_i\ \mbox{ with }\ (\Psi_\Lambda^X)_i:=\Hom_\Lambda(X,\tau_d^{-i}(X)),\]
where the multiplication is defined by $g\cdot f:=\tau_d^{-i}(g)f$ for $f\in(\Psi_\Lambda^X)_i$ and $g\in(\Psi_\Lambda^X)_j$.
\end{definition}

For example, the \emph{$(d+1)$-preprojective algebra} of $\Lambda$ \cite{IO} is defined as
\[\Pi=\Pi(\Lambda):=\Psi_\Lambda^\Lambda,\]
which is a $\Z$-graded algebra with $\Pi_i=(\Psi_\Lambda^\Lambda)_i$.

The algebra $\Pi$ is the 0-th cohomology of the $(d+1)$-Calabi-Yau completion of $\Lambda$ \cite{K}. Recall that a finite dimensional algebra $\Lambda$ with $\gldim \Lambda\le d$ is called \emph{$\tau_d$-finite} if $\tau_d^{-i}=0$ holds for $i\gg0$, or equivalently, $\tau_d^i=0$ for $i\gg0$. Clearly $\dim_k\Pi$ is finite if and only if $\Lambda$ is $\tau_d$-finite. 
Recall that  $\Lambda$ is called \emph{$d$-representation-finite} if there exists a $d$-cluster tilting $\Lambda$-module $M$ and $\gldim \Lambda\le d$ holds. In this case, the following assertions hold.
\begin{enumerate}[$\bullet$]
\item $\Lambda$ is $\tau_d$-finite \cite[Proposition 1.3]{I2}.
\item $\Pi$ regarded as an $\Lambda$-module gives a unique basic $d$-cluster tilting $\Lambda$-module \cite{IO}. 
\item $\Pi$ admits a $(d+1)$-cluster tilting module $\Pi \otimes_\Lambda\Pi$ \cite{IO,Y}, see Proposition \ref{cluster tilting} below.
\end{enumerate}
For the case $d=1$, the path algebra $\Lambda=kQ$ of an acyclic quiver $Q$ is $\tau_1$-finite if and only if it is $1$-representation-finite if and only if $Q$ is Dynkin.
As in the hereditary case, we introduce the following notion.

\begin{definition}
The \emph{total $(d+1)$-preprojective algebra} of a $\tau_d$-representation-finite algebra $\Lambda$ is defined as 
\[\Psi=\Psi(\Lambda):=\Psi_\Lambda^{\Pi},\]
which is a $\Z$-graded algebra with $\Psi_i=(\Psi_\Lambda^{\Pi})_i$.
\end{definition}


Our next main result is the following general form of Theorem \ref{main kQ}.

\begin{theorem}\label{main d}
Let $\Lambda$ be a finite dimensional algebra that is $\tau_d$-finite and has $\gldim \Lambda\le d$, $\Pi=\Pi(\Lambda)$ the $(d+1)$-preprojective algebra of $\Lambda$. 
\begin{enumerate}[\rm(a)]
\item We have an isomorphism of algebras
\[\Psi\simeq\End_{\Pi}(\Pi\otimes_\Lambda\Pi).\]
\item If, moreover, $\Lambda$ is $d$-representation-finite, then $\Psi$ is a $(d+1)$-Auslander algebra, that is, $\gldim \Psi\le d+2\le \domdim \Psi$ holds.
\item Let $\Gamma:=\End_A(\Pi)$. Then $\Psi$ is isomorphic to the tensor algebra 
\[\Psi\simeq T_\Gamma\Hom_\Lambda(\Pi,\Pi\otimes_\Lambda\Pi_1).\]
\end{enumerate}
\end{theorem}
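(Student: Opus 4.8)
I would prove part (a) first, deduce (b) from it using the theory of higher Auslander algebras, and establish (c) by a direct computation which — unlike (a) and (b) — needs only that the $d$-cluster tilting subcategory is closed under $\tau_d^-$. Throughout write $M:=\Pi$ regarded as a right $\Lambda$-module, so that $M=\bigoplus_{i\ge 0}\tau_d^{-i}\Lambda$ is the basic $d$-cluster tilting $\Lambda$-module, $\Gamma=\End_\Lambda(M)$, and (as $\Lambda$ is $\tau_d$-finite) everything in sight is finite dimensional; write also $U:=\Pi\otimes_\Lambda\Pi$, which as a $\Z$-graded right $\Pi$-module equals $M\otimes_\Lambda\Pi=\bigoplus_{i\ge0}(M\otimes_\Lambda\Pi_i)$ and is $(d+1)$-cluster tilting by Proposition \ref{cluster tilting}.

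For (a): the tensor--hom adjunction for the $\Lambda$-$\Pi$-bimodule $\Pi$ gives, for each $i\ge0$, a natural isomorphism between the degree-$i$ strand of $\End_\Pi(U)$ and $\Hom_\Lambda(M,M\otimes_\Lambda\Pi_i)$; summing over $i$ one obtains $\End_\Pi(U)\cong\bigoplus_{i\ge0}\Hom_\Lambda(M,M\otimes_\Lambda\Pi_i)$ as graded vector spaces, with product induced from $\mathrm{mult}\colon\Pi_i\otimes_\Lambda\Pi_j\to\Pi_{i+j}$ via the functor $-\otimes_\Lambda\Pi$. The crux is then the natural isomorphism
\[ Y\otimes_\Lambda\Pi_i\;\cong\;\tau_d^{-i}(Y)\qquad(i\ge 0) \]
of right $\Lambda$-modules, valid for every $Y$ in the $d$-cluster tilting subcategory $\add M$ (in particular for $Y=M$). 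Granting it, $\End_\Pi(U)\cong\bigoplus_{i\ge0}\Hom_\Lambda(M,\tau_d^{-i}M)=\Psi$, and the two multiplications agree by naturality of the displayed isomorphism in $Y$ (apply $\tau_d^{-i}$ to the case $i=1$). To prove the displayed isomorphism I would treat $Y$ projective by hand — both sides are the corresponding direct summand of $\tau_d^{-i}\Lambda$, by additivity of $\tau_d^{-i}$ — and for general $Y\in\add M$ compute both sides from a projective resolution of $Y$ of length $\le d$ (available since $\gldim\Lambda\le d$), using the identification $\Pi_1=\tau_d^-\Lambda=\Ext^d_\Lambda(D\Lambda,\Lambda)$, the finiteness of $\operatorname{pdim}D\Lambda$, and the $\operatorname{Tor}$- and $\Ext$-vanishing built into the $d$-cluster tilting subcategory; this is essentially part of the structure theory of $(d+1)$-preprojective algebras of $d$-representation-finite algebras \cite{IO,Y,I}.

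For (b): by (a), $\Psi\cong\End_\Pi(U)$ with $U$ a $(d+1)$-cluster tilting $\Pi$-module whose additive closure contains $\Pi$, and for $d$-representation-finite $\Lambda$ the algebra $\Pi$ is self-injective \cite{IO}, so $\add U$ is a functorially finite $(d+1)$-cluster tilting subcategory of $\mod\Pi$ consisting of generators-cogenerators; hence $\domdim\Psi\ge d+2$ by the standard dominant-dimension estimate, and $\gldim\Psi\le d+2$ by Iyama's theorem that the endomorphism algebra of (an additive generator of) an $n$-cluster tilting subcategory of a module category has global dimension at most $n+1$ (applied with $n=d+1$, needing no finiteness of $\gldim\Pi$), so $\Psi$ is a $(d+1)$-Auslander algebra. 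For (c): I would argue straight from the definition of $\Psi$. Since $\tau_d^-$ is additive, $\tau_d^-M=\bigoplus_{i\ge1}\tau_d^{-i}\Lambda$ is a direct summand of $M$; letting $e\in\Gamma$ be the corresponding idempotent, $\Psi_1=\Hom_\Lambda(M,\tau_d^-M)\cong e\Gamma$ is a projective right $\Gamma$-module, while the $\Gamma$-bimodule structure of $\Psi_1$ inside $\Psi$ has right action by precomposition and left action through $\tau_d^-\colon\Gamma\to\End_\Lambda(\tau_d^-M)=e\Gamma e$. Using $e\Gamma\otimes_\Gamma X\cong eX$ for left $\Gamma$-modules $X$, one computes $\Psi_1\otimes_\Gamma\Psi_i\cong\tau_d^{-i}(e)\cdot\Psi_i=\Hom_\Lambda(M,\tau_d^{-(i+1)}M)=\Psi_{i+1}$, checks that this is the multiplication map of $\Psi$, and concludes by induction that $\Psi\cong T_\Gamma\Psi_1$; the case $i=1$ of the displayed isomorphism from (a) then identifies $\Psi_1$ with $\Hom_\Lambda(\Pi,\Pi\otimes_\Lambda\Pi_1)$ as a $\Gamma$-bimodule, yielding the stated form.

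The main obstacle is the natural isomorphism $Y\otimes_\Lambda\Pi_i\cong\tau_d^{-i}(Y)$ on the $d$-cluster tilting subcategory, together with its compatibility with the two multiplications (the one coming from the non-invertible maps $\Pi_1\otimes_\Lambda\Pi_i\to\Pi_{i+1}$ and the one coming from iterating $\tau_d^-$): this is the technical heart of (a) — and what makes the bimodule in (c) explicit — and is precisely where the homological properties of $d$-cluster tilting subcategories and of higher preprojective algebras must be invoked; once it is in hand, the remaining steps are bookkeeping with adjunctions and idempotents.
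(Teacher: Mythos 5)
Your proposal is correct and follows essentially the same route as the paper: part (a) is the tensor--hom adjunction identifying $\End_\Pi(\Pi\otimes_\Lambda\Pi)$ with $\bigoplus_{i\ge0}\Hom_\Lambda(\Pi,\Pi\otimes_\Lambda\Pi_i)$ combined with the identification $\tau_d^{-i}\simeq-\otimes_\Lambda\Pi_i$ (the paper packages the first step as the abstract Theorem \ref{tensormainresult} on extended tensor algebras), part (b) is Proposition \ref{cluster tilting} plus the higher Auslander correspondence, and your idempotent computation for (c) is the same projectivization fact as the paper's Proposition \ref{tensormainresult2}. The only remark is that your ``technical heart'' is lighter than you suggest: $Y\otimes_\Lambda\Pi_i\simeq\tau_d^{-i}(Y)$ holds naturally on all of $\mod\Lambda$, not just on $\add\Pi$, because $\pdim_\Lambda D\Lambda\le d$ makes $\tau_d^-=\Ext^d_\Lambda(D\Lambda,-)$ right exact and hence isomorphic to $-\otimes_\Lambda\Ext^d_\Lambda(D\Lambda,\Lambda)$, so no cluster-tilting Ext-vanishing is needed --- this is the standard fact from \cite[Section 2]{IO} that the paper cites.
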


Notice that, if $\Lambda$ is $d$-representation-finite, then $\Gamma$ above is a \emph{$d$-Auslander algebra} of $\Lambda$ \cite{I}.


We illustrate Theorems \ref{main d}(a)(b) with the following figure. 
\[\begin{tikzpicture}[every text node part/.style={align=center}]
\tikzset{
    equ/.style={-,double equal sign distance},
  }
\node[draw] (vAusA) at (0,3.5) {$\Gamma$: $d$-Auslander algebra of $\Lambda$};
\node[draw] (vPsi) at (4,2) {$\Psi$: total $(d+1)$-preprojective\\algebra of $\Lambda$};
\node[draw] (vT) at (0,0.5) {Tensor algebra of \\$\Gamma$-bimodule $\Hom_\Lambda(\Pi,\Pi\otimes_\Lambda\Pi_1)$};
\node[draw] (vAusPi) at (8,0.5) {$(d+1)$-Auslander algebra of $\Pi$};
\node[draw] (vPi) at (8,3.5) {$\Pi$: $(d+1)$-preprojective algebra of $\Lambda$};
\node[draw] (v1) at (4,5) {$\Lambda$: $d$-representation-finite algebra};
\draw  (v1) edge[->] (vPi);
\draw  (vPi) edge[->] (vAusPi);
\draw  (v1) edge[->] (vPsi);
\draw  (vPsi) edge[equ] (vAusPi);
\draw  (vAusPi) edge[equ] (vT);
\draw  (vPsi) edge[equ] (vT);
\draw  (vAusA) edge[->] (vT);
\draw  (v1) edge[->] (vAusA);
\end{tikzpicture}\]

In parallel to the equivalent characterisations of the classical preprojective algebra shown by Ringel, we show that the total preprojective algebras admit a combinatorial description by quiver and relations.
Let $\Lambda$ be a $d$-representation-finite algebra with $d$-cluster tilting module $\Pi$, and $\Gamma=\End_\Lambda(\Pi)$ the $d$-Auslander algebra. We take a presentation of $\Gamma$ by quiver with relations
\[\Gamma \cong kQ_\Gamma/I_\Gamma.\]
Then the vertices of $Q_\Gamma$ are in bijection to the indecomposable direct summands $X$ of $\Pi$. Define a new quiver $\widetilde{Q}_\Gamma$ by adding to the quiver $Q_\Gamma$ a new arrow $q_X: \tau_d^-(X) \rightarrow X$ for every indecomposable non-injective summand $X$ of $\Pi$.
Locally, we have the following subquiver in $\widetilde{Q}_{\Gamma}$.
\[\begin{tikzcd}
	X \dar[swap]{a} & {\tau_d^-(X)} \lar[swap]{q_X} \dar{\tau_d^-(a)} \\
	Y & {\tau_d^-(Y)} \lar{q_Y}
\end{tikzcd}\]
For each arrow $a:X \rightarrow Y$ of $Q_\Gamma$, let
\[K\widetilde{Q}_\Gamma/(I_\Gamma)\ni r_a:=\left\{\begin{array}{ll}
aq_X-q_Y\tau_d^-(a)&\mbox{if $X$ and $Y$ are non-injective,}\\
aq_X&\mbox{if $X$ is non-injective and $Y$ is injective,}\\
0&\mbox{if $X$ is injective.}
\end{array}\right.\]

\begin{theorem}\label{mainpresentation}
Let $\Lambda$ be a $d$-representation-finite algebra with $d$-cluster tilting module $\Pi$. Then we have a $k$-algebra isomorphism
\[\Psi\simeq  k\widetilde{Q}_\Gamma/(I_\Gamma,r_a\mid a\in (Q_\Gamma)_1).\]

\end{theorem}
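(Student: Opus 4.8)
The plan is to combine the tensor-algebra description from Theorem~\ref{main d}(c) with a chosen presentation of $\Gamma$ and an explicit description of the $\Gamma$-bimodule $N:=\Hom_\Lambda(\Pi,\Pi\otimes_\Lambda\Pi_1)$. By Theorem~\ref{main d}(c), $\Psi\simeq T_\Gamma N$, so it suffices to identify a presentation of the tensor algebra $T_\Gamma N$ by quiver with relations. The arrows of such a presentation are those of $Q_\Gamma$ (coming from $\rad\Gamma/\rad^2\Gamma$) together with a set of arrows giving a minimal set of generators of $N$ as a $\Gamma$-bimodule, or more precisely generators of $N/(\rad\Gamma\cdot N+N\cdot\rad\Gamma)$ as a vector space; the relations are $I_\Gamma$ together with relations recording how the images of paths of length $\ge 2$ involving exactly one new arrow are expressed, i.e. the bimodule action of $\Gamma$ on $N$.

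First I would compute $N$ explicitly. Writing $\Pi=\bigoplus_X X$ over the indecomposable summands $X$ of $\Pi$, one has $\Pi\otimes_\Lambda\Pi_1$, whose summands are (by the analysis underlying Proposition~\ref{cluster tilting} and the standard $(d+1)$-preprojective yoga) the modules $\tau_d^-(X)$ for $X$ running over the non-injective indecomposable summands of $\Pi$, with the injective summands contributing zero to $\Pi_1$. Hence $N=\Hom_\Lambda(\Pi,\bigoplus_{X\ \mathrm{non\text{-}inj}}\tau_d^-(X))$, and a minimal generating arrow for the $X$-summand is exactly the identity-type map corresponding to $q_X:\tau_d^-(X)\to X$, landing in $(\Psi)_1$. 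So the new arrows of $\widetilde Q_\Gamma$ are precisely $q_X$ for non-injective indecomposable $X$, matching the statement. The right $\Gamma$-action on $N$ is precomposition, which for an arrow $a:X\to Y$ of $Q_\Gamma$ sends $q_X$ to $q_X a$; the left $\Gamma$-action is postcomposition with $\tau_d^-$ applied to morphisms, which sends the generator to $\tau_d^-(a)\cdot q_Y$ up to the identification $\tau_d^-(q_Y)$-type maps, i.e. locally we get the square
\[\begin{tikzcd}
	X \dar[swap]{a} & {\tau_d^-(X)} \lar[swap]{q_X} \dar{\tau_d^-(a)} \\
	Y & {\tau_d^-(Y)} \lar{q_Y}
\end{tikzcd}\]
and the commutativity of this square in $\Psi$ — valid since $\tau_d^-$ is a functor on $\mod\Lambda$ — is exactly the relation $r_a=aq_X-q_Y\tau_d^-(a)$ when both $X,Y$ are non-injective. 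When $Y$ is injective, $q_Y$ does not exist and $\tau_d^-(Y)=0$, so the composite $aq_X$ lies in $(\Psi)_1$ but its target summand has been killed, forcing $r_a=aq_X=0$; and when $X$ is injective there is no arrow $q_X$ at all, so there is no relation to impose, i.e. $r_a=0$ vacuously. This produces exactly the relations $r_a$ in the statement.

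Finally I would assemble these into the claimed isomorphism. Define $\varphi:k\widetilde Q_\Gamma\to\Psi$ by sending $Q_\Gamma$-arrows via the fixed presentation $\Gamma\cong kQ_\Gamma/I_\Gamma$ into $(\Psi)_0=\Gamma$ and sending each new arrow $q_X$ to the corresponding degree-$1$ generator of $\Psi$ described above. One checks $\varphi$ annihilates $I_\Gamma$ (already true in $\Gamma$) and each $r_a$ (the computation of the bimodule action just done), hence factors through the quotient algebra $B:=k\widetilde Q_\Gamma/(I_\Gamma,r_a\mid a)$. Grading $B$ by the number of new arrows, $\varphi$ is graded; in degree $0$ it is the isomorphism $\Gamma\xrightarrow{\sim}(\Psi)_0$; and in degree $1$ the relations $I_\Gamma$ and $r_a$ are precisely a presentation of $N$ as a $\Gamma$-bimodule, so $B_1\xrightarrow{\sim}N=(\Psi)_1$. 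Since $\Psi\simeq T_\Gamma N$ is generated in degrees $0$ and $1$ with all higher relations consequences of the degree-$\le 1$ ones (this is the defining property of a tensor algebra, and $B$ is by construction the quotient of $T_\Gamma(B_1)$ by no further relations beyond those forced in degree $1$), an induction on degree using the surjectivity in degrees $0,1$ and the freeness of $T_\Gamma N$ over $\Gamma$ gives that $\varphi$ is an isomorphism in every degree.

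The main obstacle I anticipate is the bookkeeping in the second paragraph: pinning down precisely that the summands of $\Pi\otimes_\Lambda\Pi_1$ are the $\tau_d^-(X)$ for non-injective $X$ with the correct multiplicities, and that the left and right $\Gamma$-module structures on $N$ translate into postcomposition-with-$\tau_d^-$ and precomposition respectively — this is where one must be careful with the $\op$ conventions and with the identification of $\Hom_\Lambda(\Pi,\tau_d^-(\Pi))$ with a piece of $\Psi$. Once the bimodule $N$ and its $\Gamma$-bimodule presentation are correctly identified, the rest is the formal "presentation of a tensor algebra" argument, which is routine; but getting $N$ and its actions exactly right, including the vanishing phenomena at injective vertices, is the crux.
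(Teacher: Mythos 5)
Your route is the same as the paper's: invoke Theorem \ref{main d}(c) to get $\Psi\simeq T_\Gamma N$ with $N=\Hom_\Lambda(\Pi,\Pi\otimes_\Lambda\Pi_1)\cong\Hom_\Lambda(\Pi,\tau_d^-(\Pi))$, take the new arrows $q_X$ to be the canonical degree-one elements, and check that the relations $r_a$ hold in $\Psi$ (the commuting square from functoriality of $\tau_d^-$, with $aq_X=0$ when $\tau_d^-(Y)=0$). Those verifications are correct, as is your reduction of higher degrees to degree one: killing an ideal generated by degree-one elements of $T_\Gamma(P)$, where $P$ is the free $\Gamma$-bimodule on the $q_X$, does produce $T_\Gamma(P/P')$.

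The genuine gap is the one step you declare routine: the claim that in degree one ``$I_\Gamma$ and the $r_a$ are precisely a presentation of $N$ as a $\Gamma$-bimodule''. Concretely, with $P=\bigoplus_{X}\Gamma e_X\otimes_k e_{\tau_d^-(X)}\Gamma$ (sum over non-injective indecomposable summands $X$ of $\Pi$) and the evaluation map $\alpha\colon P\to N$ sending $q_X$ to the canonical generator, you only verify the easy inclusion $P'\subseteq\Ker\alpha$, where $P'$ is the sub-bimodule generated by the $r_a$; this yields a surjective graded algebra map $k\widetilde Q_\Gamma/(I_\Gamma,r_a)\to\Psi$, and your final induction simply presupposes $B_1\xrightarrow{\ \sim\ }N$, i.e.\ the reverse inclusion $\Ker\alpha\subseteq P'$, which is never argued. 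This is exactly where the paper does its work, in the general presentation result for $T_\Gamma(f\Gamma)$ with $\phi$-twisted left action (Section 4): setting $V:=\bigoplus_X ke_X\otimes_k e_{\tau_d^-(X)}\Gamma\subset P$, one notes $\alpha|_V$ is an isomorphism onto $f\Gamma\cong N$ and proves $P=P'+V$ by a telescoping computation that rewrites, for any path $a_m\cdots a_1\colon X\to Y$ of $Q_\Gamma$, the element $a_m\cdots a_1q_X$ modulo $P'$ as $q_Y\phi(a_m\cdots a_1)$ (with $q_Y:=0$ if $Y$ is injective, $\phi=\tau_d^-$); that is, one pushes all left factors across the new arrows using the relations, and the injective-vertex cases are precisely why the monomial relations $aq_X$ suffice. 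Without this computation (or an equivalent bound $\dim_k B_1\le\dim_k N$), your argument establishes only that $\Psi$ is a quotient of $k\widetilde Q_\Gamma/(I_\Gamma,r_a)$, not the claimed isomorphism.
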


We remark that in the case of a hereditary algebra $\Lambda=kQ$ of Dynkin type, the combinatorial description of the total preprojective algebras coincide with the algebras studied in \cite{GLS}.


\medskip\noindent
{\bf Acknowledgements. }
This research is motivated by experiments with the computer algebra package \cite{QPA}. AC is supported by JSPS Grant-in-Aid for Scientific Research (C) 24K06666.  OI is supported by JSPS Grant-in-Aid for Scientific Research (B) 22H01113.  
RM was supported by the DFG with the project number 428999796.

\section{Preliminaries}

\subsection{On $X$-$(d+1)$-preprojective algebras}
In this section, we prove the following basic properties of $X$-$(d+1)$-preprojective algebras.

\begin{proposition} \label{Moritaequpreproj}
Let $\Lambda$ be a finite dimensional algebra with $\gldim\Lambda\le d$, and $X,Y\in\mod\Lambda$.
\begin{enumerate}[\rm(a)]
    \item If $Y$ is a direct summand of $X$, then $\Psi_{\Lambda}^Y \cong e \Psi_{\Lambda}^X e$ for an idempotent $e \in \Psi_{\Lambda}^X$.
    \item If $\add X = \add Y$, then $\Psi_{\Lambda}^X$ is Morita equivalent to $\Psi_{\Lambda}^Y$.
    \item If $Y \in \add X$, then $\Psi_{\Lambda}^Y$ is Morita equivalent to $e\Psi_{\Lambda}^Xe$ for an idempotent $e \in \Psi_{\Lambda}^X$.
\end{enumerate}
\end{proposition}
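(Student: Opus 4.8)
The plan is to prove (a) by a direct construction and then deduce (b) and (c) from it using standard facts about full idempotents. Since $\gldim\Lambda\le d$ is only needed to make $\tau_d^-$ a reasonable additive endofunctor of $\mod\Lambda$, the whole argument is essentially formal.

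For (a): since $\tau_d^{-0}=\id$, the degree-zero component $(\Psi_\Lambda^X)_0=\Hom_\Lambda(X,X)=\End_\Lambda(X)$ is a unital subalgebra of $\Psi_\Lambda^X$ on which the multiplication $g\cdot f=\tau_d^{-0}(g)\circ f$ is ordinary composition. Write $X=Y\oplus Z$ and let $e\in\End_\Lambda(X)=(\Psi_\Lambda^X)_0$ be the idempotent with image $Y$, with associated inclusion $\iota\colon Y\to X$ and projection $\pi\colon X\to Y$, so $\pi\iota=\id_Y$ and $\iota\pi=e$. Unwinding the definition of the multiplication gives, for $f\in(\Psi_\Lambda^X)_i=\Hom_\Lambda(X,\tau_d^{-i}(X))$, the identity $e\cdot f\cdot e=\tau_d^{-i}(e)\circ f\circ e$, hence $e(\Psi_\Lambda^X)_ie=\tau_d^{-i}(e)\,\Hom_\Lambda(X,\tau_d^{-i}(X))\,e$. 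Since $\tau_d^{-i}$ is additive it carries $X=Y\oplus Z$ to $\tau_d^{-i}(X)=\tau_d^{-i}(Y)\oplus\tau_d^{-i}(Z)$ with summand inclusion $\tau_d^{-i}(\iota)$ and projection $\tau_d^{-i}(\pi)$, so the degreewise maps
\[\Hom_\Lambda(Y,\tau_d^{-i}(Y))\longrightarrow e(\Psi_\Lambda^X)_ie,\qquad \phi\longmapsto \tau_d^{-i}(\iota)\circ\phi\circ\pi,\]
are bijective with inverse $f\mapsto\tau_d^{-i}(\pi)\circ f\circ\iota$. I would then check that they assemble into an algebra isomorphism $\Psi_\Lambda^Y\cong e\Psi_\Lambda^Xe$: multiplicativity follows from functoriality of $\tau_d^{-i}$ together with $\pi\iota=\id_Y$ and $\tau_d^{-i}(\pi)\tau_d^{-i}(\iota)=\id_{\tau_d^{-i}(Y)}$, and $\id_Y\mapsto e$ matches the units. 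This short bookkeeping is the only genuinely computational point, and hence the main (minor) obstacle.

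For (b): apply (a) to the two summands $X$ and $Y$ of $X\oplus Y$, yielding idempotents $e_X,e_Y\in(\Psi_\Lambda^{X\oplus Y})_0$ with $\Psi_\Lambda^X\cong e_X\Psi_\Lambda^{X\oplus Y}e_X$ and $\Psi_\Lambda^Y\cong e_Y\Psi_\Lambda^{X\oplus Y}e_Y$. Because $\add X=\add Y=\add(X\oplus Y)$, the identity of $X\oplus Y$ factors through an object of $\add(e_X(X\oplus Y))$ and through one of $\add(e_Y(X\oplus Y))$, so $\id_{X\oplus Y}$ lies in $(\Psi_\Lambda^{X\oplus Y})_0\,e_X\,(\Psi_\Lambda^{X\oplus Y})_0$ and in $(\Psi_\Lambda^{X\oplus Y})_0\,e_Y\,(\Psi_\Lambda^{X\oplus Y})_0$; thus $e_X$ and $e_Y$ are full idempotents of $\Psi_\Lambda^{X\oplus Y}$. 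Invoking the standard fact that a full idempotent $e$ of an algebra $A$ induces a Morita equivalence between $A$ and $eAe$ (via the bimodules $Ae$ and $eA$), we get $\Psi_\Lambda^X\sim\Psi_\Lambda^{X\oplus Y}\sim\Psi_\Lambda^Y$.

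For (c): let $M$ be a basic additive generator of $\add Y$, so $\add M=\add Y$. Each indecomposable summand of $M$ lies in $\add Y\subseteq\add X$ and is therefore a direct summand of $X$; as $M$ is basic, Krull--Schmidt shows $M$ is a direct summand of $X$. By (a), $\Psi_\Lambda^M\cong e\Psi_\Lambda^Xe$ for some idempotent $e\in\Psi_\Lambda^X$, and by (b), $\Psi_\Lambda^M$ is Morita equivalent to $\Psi_\Lambda^Y$. Combining the two gives that $\Psi_\Lambda^Y$ is Morita equivalent to $e\Psi_\Lambda^Xe$, as required.
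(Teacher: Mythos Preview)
Your proof is correct. The paper takes a slightly more conceptual route: it first observes that $\Psi_\Lambda^X\simeq\End_\C(X)$ for the orbit category $\C:=(\mod\Lambda)/\tau_d^-$ (objects those of $\mod\Lambda$, morphisms $\Hom_\C(X,Y)=\bigoplus_{i\ge0}\Hom_\Lambda(X,\tau_d^{-i}Y)$ with the obvious composition), and then invokes the general additive-category facts that (a) a summand $Y$ of $X$ gives $\End_\C(Y)\cong e\End_\C(X)e$, and (b) $\add X=\add Y$ makes $\Hom_\C(X,Y)$ a progenerator for $\End_\C(X)$ with endomorphism ring $\End_\C(Y)$. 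Your degreewise computation in (a) is exactly this orbit-category identification unwound by hand, and your (b) via full idempotents in $\Psi_\Lambda^{X\oplus Y}$ is an equivalent standard argument for the same Morita equivalence. The paper's packaging buys brevity and hides the functoriality bookkeeping; your version is more self-contained and avoids introducing an auxiliary category. Your derivation of (c) from (a) and (b) via a basic additive generator is the natural one and matches how the paper implicitly reduces (c) to its two lemmas.
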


We prepare the following basic observations.

\begin{lemma} \label{Moritaequpreproj0}
Let $\C$ be an additive category, and $X,Y\in\C$.
\begin{enumerate}[\rm(a)]
    \item If $Y$ is a direct summand of $X$, then $\End_{\C}(Y) \cong e\End_{\C}(X)e$ for an idempotent $e \in \End_{\C}(X)$.
    \item If $\add X = \add Y$, then $\End_{\C}(X)$ is Morita equivalent to $\End_{\C}(Y)$.
\end{enumerate}
\end{lemma}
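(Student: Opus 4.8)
The statement to prove is Lemma \ref{Moritaequpreproj0}, which is a purely formal fact about additive categories. The plan is to reduce everything to the standard correspondence between idempotents in $\End_\C(X)$ and direct summands of $X$, using the additive Yoneda philosophy (i.e. passing to the category of finitely presented functors, or equivalently working with the idempotent completion).

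For part (a): if $Y$ is a direct summand of $X$, fix a splitting $X \cong Y \oplus Y'$ and let $e \in \End_\C(X)$ be the idempotent given by the composite $X \twoheadrightarrow Y \hookrightarrow X$. I would then observe that $e\End_\C(X)e$ is canonically identified with $\End_\C(Y)$: an element of $e\End_\C(X)e$ is a morphism $f : X \to X$ with $f = efe$, which factors uniquely as $X \twoheadrightarrow Y \xrightarrow{\bar f} Y \hookrightarrow X$, and the assignment $f \mapsto \bar f$ is a $k$-algebra isomorphism onto $\End_\C(Y)$ with inverse $g \mapsto \iota g \pi$ where $\iota,\pi$ are the inclusion and projection. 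One checks multiplicativity and unitality directly; this is the routine calculation I will not spell out in full.

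For part (b): since $\add X = \add Y$, both $X$ and $Y$ are direct summands of, say, $Z := X \oplus Y$, and conversely $Z \in \add X$ and $Z \in \add Y$. By part (a) there are idempotents $e, f \in \End_\C(Z)$ with $e\End_\C(Z)e \cong \End_\C(X)$ and $f\End_\C(Z)f \cong \End_\C(Y)$. It then suffices to show that $e$ and $f$ are \emph{full} idempotents of $\End_\C(Z)$, i.e. $\End_\C(Z) e \End_\C(Z) = \End_\C(Z) = \End_\C(Z) f \End_\C(Z)$, since a corner $e\Lambda e$ of a ring $\Lambda$ at a full idempotent is Morita equivalent to $\Lambda$. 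Fullness of $e$ follows because $X$ generates $\add Z = \add X$: writing $Z$ as a direct summand of $X^n$ for some $n$ (possible since $Z \in \add X$), the identity of $Z$ factors through $X^n$, hence through maps landing in the image of $e$, which forces $1_Z \in \End_\C(Z)e\End_\C(Z)$; symmetrically for $f$. Chaining the Morita equivalences $\End_\C(X) \sim \End_\C(Z) \sim \End_\C(Y)$ gives the claim.

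The only mild subtlety — and the closest thing to an obstacle — is bookkeeping: making sure the isomorphism $e\End_\C(Z)e \cong \End_\C(X)$ from (a) is compatible with the ring structure (it is, essentially by definition of the idempotent associated to a splitting) and recalling precisely the statement that corners at full idempotents are Morita equivalent. No deep input is needed; everything is formal once one works inside $\End_\C(X \oplus Y)$ and uses that $X$, $Y$ mutually generate each other's additive closures. I would therefore keep the written proof short, citing the full-idempotent Morita reduction and leaving the elementary identifications to the reader.
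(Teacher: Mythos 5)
Your argument is correct; part (a) is essentially identical to the paper's (take $i:Y\to X$, $p:X\to Y$ with $pi=1_Y$ and set $e:=ip$; note that you never actually need the complement $Y'$, which is just as well since idempotents need not split in a general additive category). For part (b), however, you take a genuinely different route. The paper argues via projectivization: it observes directly that $P:=\Hom_{\C}(X,Y)$ is a progenerator over $\End_{\C}(X)$ with $\End_{\End_{\C}(X)}(P)\cong\End_{\C}(Y)$, which gives the Morita equivalence in one line from the classical bimodule criterion. You instead pass to $Z=X\oplus Y$, realize both $\End_{\C}(X)$ and $\End_{\C}(Y)$ as corners $e\End_{\C}(Z)e$ and $f\End_{\C}(Z)f$ via part (a), and check that $e$ and $f$ are full idempotents (writing $1_Z$ as a sum of maps factoring through $X$, using $Z\in\add X$, and symmetrically for $Y$), then invoke the fact that a corner at a full idempotent is Morita equivalent to the ambient ring. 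Both proofs are formal and correct; the paper's is shorter and produces the equivalence bimodule explicitly, while yours leans on part (a) and the full-idempotent criterion, making the role of the hypothesis $\add X=\add Y$ (mutual generation) more visible. Your fullness verification is the only step that needs care, and it does go through: if $1_Z=\sum_j b_ja_j$ with $a_j:Z\to X$, $b_j:X\to Z$, then $b_ja_j=(b_jp)(e)(ia_j)\in\End_{\C}(Z)e\End_{\C}(Z)$.
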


\begin{proof}
(a) Take $i:Y\to X$ and $p:X\to Y$ such that $pi=1_Y$. Then $e:=ip\in\End_{\C}(X)$ satisfies the desired condition.

(b) $P:=\Hom_{\C}(X,Y)$ gives a progenerator of $\End_{\C}(X)$ such that $\End_{\End_{\C}(X)}(P)=\End_{\C}(Y)$.
\end{proof}

For a finite dimensional algebra $\Lambda$ with $\gldim\Lambda\le d$, define the \emph{orbit category} $(\mod\Lambda)/\tau_d^-$ as follows.
The objects are the same as $\mod\Lambda$, and the Hom-space is defined by
\[\Hom_{(\mod\Lambda)/\tau_d^-}(X,Y):=\bigoplus_{i\ge0}\Hom_\Lambda(X,\tau_d^{-i}(Y)).\]
The composition of $f\in\Hom_\Lambda(X,\tau_d^{-i}(Y))\subset\Hom_{(\mod\Lambda)/\tau_d^-}(X,Y)$ and $g\in\Hom_\Lambda(Y,\tau_d^{-j}(Z))\subset\Hom_{(\mod\Lambda)/\tau_d^-}(Y,Z)$ is $\tau_d^{-i}(g)\circ f\in\Hom_\Lambda(X,\tau_d^{-i-j}(Z))\subset\Hom_{(\mod\Lambda)/\tau_d^-}(X,Z)$.

The following assertion is clear from the definition.


\begin{lemma}\label{orbit category}
For each $X\in\mod\Lambda$, we have an isomorphism $\Psi_\Lambda^X\simeq\End_{(\mod\Lambda)/\tau_d^-}(X)$ of algebras.
\end{lemma}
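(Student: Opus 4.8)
The plan is to unwind both sides of the claimed isomorphism $\Psi_\Lambda^X\simeq\End_{(\mod\Lambda)/\tau_d^-}(X)$ purely from the definitions and observe that they agree on the nose. On the left, the definition of the $X$-$(d+1)$-preprojective algebra gives $\Psi_\Lambda^X=\bigoplus_{i\ge0}(\Psi_\Lambda^X)_i$ with $(\Psi_\Lambda^X)_i=\Hom_\Lambda(X,\tau_d^{-i}(X))$, and multiplication $g\cdot f=\tau_d^{-i}(g)\circ f$ for $f\in(\Psi_\Lambda^X)_i$, $g\in(\Psi_\Lambda^X)_j$. On the right, the definition of the orbit category $(\mod\Lambda)/\tau_d^-$ specialised to $Y=X$ gives $\Hom_{(\mod\Lambda)/\tau_d^-}(X,X)=\bigoplus_{i\ge0}\Hom_\Lambda(X,\tau_d^{-i}(X))$, with composition of $f\in\Hom_\Lambda(X,\tau_d^{-i}(X))$ and $g\in\Hom_\Lambda(X,\tau_d^{-j}(X))$ given by $\tau_d^{-i}(g)\circ f\in\Hom_\Lambda(X,\tau_d^{-i-j}(X))$. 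Thus the underlying graded $k$-vector spaces are literally equal, and the multiplication formulas coincide term by term in each graded component.

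Concretely, I would define the map $\phi\colon\Psi_\Lambda^X\to\End_{(\mod\Lambda)/\tau_d^-}(X)$ to be the identity on each summand $(\Psi_\Lambda^X)_i=\Hom_\Lambda(X,\tau_d^{-i}(X))$, extended additively. Then $\phi$ is visibly a $k$-linear bijection, and the only thing to check is compatibility with multiplication, which is exactly the matching of the two formulas $g\cdot f=\tau_d^{-i}(g)\circ f$ above; one should also note it sends the unit $1_X\in(\Psi_\Lambda^X)_0=\End_\Lambda(X)$ to the identity morphism of $X$ in the orbit category, since the degree-$0$ part of each side is $\End_\Lambda(X)$ with its usual composition. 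Associativity of the orbit-category composition (which is part of the claim that $(\mod\Lambda)/\tau_d^-$ is a genuine category) and associativity of the multiplication on $\Psi_\Lambda^X$ are then the same statement, so no independent verification is needed.

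There is essentially no obstacle here — this is the "clear from the definition" remark made just before the lemma, and the proof is a one-line bookkeeping check. The only point worth a sentence of care is making sure the functoriality of $\tau_d^-$ is used correctly so that $\tau_d^{-i}(g)$ is a well-defined $\Lambda$-module morphism $\tau_d^{-i}(X)\to\tau_d^{-i-j}(X)$ when $g\colon X\to\tau_d^{-j}(X)$; this is immediate since $\tau_d^-=\Ext^d_\Lambda(D\Lambda,-)$ is a functor $\mod\Lambda\to\mod\Lambda$. Hence the identity map on the graded pieces is an isomorphism of $k$-algebras, as claimed.
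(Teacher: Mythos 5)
Your proposal is correct and matches the paper, which offers no argument beyond the remark that the assertion is clear from the definition: the graded pieces and multiplication rules of $\Psi_\Lambda^X$ and of $\End_{(\mod\Lambda)/\tau_d^-}(X)$ coincide verbatim. Your identity-on-components map is exactly the intended identification, so nothing further is needed.
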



Now we are ready to prove Proposition \ref{Moritaequpreproj}.

\begin{proof}[Proof of Proposition \ref{Moritaequpreproj}]
Let $\C:=(\mod\Lambda)/\tau_d^-$. Then the claims follow from Lemmas \ref{Moritaequpreproj0} and \ref{orbit category}.
\end{proof}

\subsection{Cluster tilting for higher preprojective algebras}

Throughout this section, let $\Lambda$ be a $d$-representation-finite algebra and $\Pi$ be its $(d+1)$-preprojective algebra for some $d\ge 1$.
The aim of this section is to explain that results in \cite{IO,Y} show that $\Pi\otimes_\Lambda\Pi$ is a $d$-cluster tilting $\Pi$-module.

Recall that $\Pi$ is a $\Z$-graded algebra with $\Pi_i=\tau_d^{-i}(\Lambda)$.
Let $\mod^\Z\Pi$ be the category of finitely generated $\Z$-graded $\Pi$-modules, and $\underline{\mod}^\Z\Pi$ its stable category.
For $M\in\mod^{\Z}\Pi$ and $j\in\Z$, we have a subobject $M_{\ge j}:=\bigoplus_{i\ge j}M_j$ of $M$ and a factor object $M_{\le j}:=\bigoplus_{i\le j}M_i$ of $M$.

\begin{proposition}[cf.\ \cite{IO,Y}]\label{tilting}
The stable category $\underline{\mod}^\Z \Pi$ admits tilting objects
\[T:= \bigoplus_{i\ge 0} \Pi(i)_{\le0}\ \mbox{ and }\ U:=\bigoplus_{i\ge 1} \Pi(i)_{\ge 0}\simeq\Pi\otimes_\Lambda\Pi,\]
where we define the $\Z$-grading on $\Pi\otimes_\Lambda\Pi$ by $(\Pi\otimes_\Lambda\Pi)_i:=\Pi\otimes_\Lambda\Pi_i$.
\end{proposition}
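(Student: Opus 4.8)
The plan is to deduce this from the established results of Iyama--Oppermann \cite{IO} and Yamaura \cite{Y} on the $\mathbb{Z}$-graded stable category of a higher preprojective algebra. Recall that since $\Lambda$ is $d$-representation-finite, $\Pi$ is a self-injective $\mathbb{Z}$-graded algebra (indeed $\tau_d$-finite forces $\dim_k\Pi<\infty$, and the bimodule Calabi--Yau property of the $(d+1)$-Calabi--Yau completion gives self-injectivity), so $\underline{\mod}^{\mathbb{Z}}\Pi$ is a triangulated category. The key input I would invoke is that $\Pi$, viewed as a graded module over itself generated in degree $0$, together with its grading shifts, assembles into tilting objects in this stable category, and that the tilting object built from the ``positive truncations'' is graded-isomorphic to $\Pi\otimes_\Lambda\Pi$.

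First I would unwind the two candidate objects. For $T=\bigoplus_{i\ge 0}\Pi(i)_{\le 0}$, note $\Pi(i)_{\le 0}$ has a nonzero graded piece in degrees $-i,\dots,0$; concretely $(\Pi(i)_{\le 0})_j=\Pi_{i+j}$ for $-i\le j\le 0$. For $U=\bigoplus_{i\ge 1}\Pi(i)_{\ge 0}$, we have $(\Pi(i)_{\ge 0})_j=\Pi_{i+j}$ for $j\ge 0$; as an ungraded module this is a quotient of $\Pi(i)$ by its part in degrees $<0$. The graded isomorphism $U\simeq\Pi\otimes_\Lambda\Pi$ I would check by hand: $\Pi\otimes_\Lambda\Pi=\bigoplus_{i\ge 0}\Pi\otimes_\Lambda\Pi_i$, and since $\Pi_i=\tau_d^{-i}(\Lambda)$ is an $\Lambda$-module placed in the left tensor factor's coefficients, the right $\Pi$-module $\Pi\otimes_\Lambda\Pi_i$ with the shifted grading matches $\Pi(i+?)_{\ge 0}$ after reindexing; the bookkeeping of which shift is a routine check once one fixes conventions, using that $\Pi$ is generated in degree $0$ over $\Lambda=\Pi_0$. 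I would then observe that $\Pi\otimes_\Lambda\Pi$ is generated in degree $0$ and that the direct sum over $i\ge 1$ of positive truncations recovers it.

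The substantive step is verifying the tilting property, i.e. that $T$ and $U$ are (classical) tilting objects in $\underline{\mod}^{\mathbb{Z}}\Pi$: they generate the triangulated category and have no self-extensions in nonzero degrees of the triangulated shift. Rather than reproving this, I would cite the relevant statements of \cite{IO} and \cite{Y}: Iyama--Oppermann show $\underline{\mod}^{\mathbb{Z}}\Pi$ is triangle-equivalent to $\Db(\mod\Lambda)$ (or to $\Db$ of the $d$-Auslander algebra in the $d$-representation-finite case) via a tilting object, and Yamaura identifies explicit tilting objects of this truncation form. So my job is just to match notation: identify their tilting object with one of $T$, $U$, and then obtain the other by applying the grading-shift autoequivalence $(1)$ together with the syzygy/cosyzygy functor, using that $\Pi(i)_{\le 0}$ and $\Pi(i)_{\ge 0}$ sit in a short exact sequence $0\to\Pi(i)_{\ge 1}\to\Pi(i)\to\Pi(i)_{\le 0}\to 0$ with $\Pi(i)$ projective-injective, so $\Pi(i)_{\le 0}\simeq\Omega^{-1}(\Pi(i)_{\ge 1})$ in the stable category.

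The main obstacle I anticipate is purely one of \emph{conventions and bookkeeping}: \cite{IO,Y} are stated with possibly different grading conventions (homological vs.\ internal grading, left vs.\ right modules, and the precise placement of the shift in $\Pi\otimes_\Lambda\Pi$), so the real work is to pin down that the object called $U$ here is the one appearing there, and that the degree ranges in the truncations $(\ge 0)$, $(\le 0)$ are the correct ones for the tilting conclusion rather than being off by one. Once the dictionary is fixed, the ``cf.'' in the statement is honest and the proof is a short citation plus the elementary identification $U\simeq\Pi\otimes_\Lambda\Pi$ sketched above. I would therefore structure the proof as: (1) recall $\Pi$ is graded self-injective so $\underline{\mod}^{\mathbb{Z}}\Pi$ is triangulated; (2) quote the tilting theorem of \cite{IO,Y} in the form that gives one of $T,U$; (3) derive the other via $(1)$ and $\Omega^{\pm 1}$; (4) verify $U\simeq\Pi\otimes_\Lambda\Pi$ as graded modules by direct inspection.
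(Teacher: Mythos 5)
Your proposal follows essentially the same route as the paper's proof: cite Yamaura \cite[(3.1)]{Y} for the tilting object $T=\bigoplus_{i\ge0}\Pi(i)_{\le0}$, obtain $U=\Omega T(1)$ by the degree-shift and syzygy autoequivalences (via exactly the short exact sequence $0\to\Pi(i)_{\ge1}\to\Pi(i)\to\Pi(i)_{\le0}\to0$ you write down), and then identify $U$ with $\Pi\otimes_\Lambda\Pi$. The one bookkeeping correction to your step (4): the decomposition must be taken in the \emph{left} tensor factor, $\Pi\otimes_\Lambda\Pi=\bigoplus_{i\ge0}\Pi_i\otimes_\Lambda\Pi$ with $\Pi_i\otimes_\Lambda\Pi\simeq\Pi_{\ge i}(i)$ (your pieces $\Pi\otimes_\Lambda\Pi_i$ are just the graded components, not right $\Pi$-submodules), which yields $\Pi\otimes_\Lambda\Pi\simeq\Pi\oplus U$ in $\mod^{\Z}\Pi$; so the claimed isomorphism $U\simeq\Pi\otimes_\Lambda\Pi$ holds in the stable category, up to the projective summand $\Pi$, rather than ``as graded modules'' on the nose.
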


\begin{proof}
By \cite[(3.1)]{Y}, $\underline{\mod}^\Z \Pi$ admits a tilting object $T := \bigoplus_{i\ge 0} \Pi(i)_{\le0}$.
Applying the degree shift functor $(1)$ and taking its syzygy, we obtain another tilting object
\[
U:=\Omega T(1) = \bigoplus_{i\ge 1} \Pi(i)_{\ge 0} = \bigoplus_{i\ge 1} \Pi_{\ge i}(i).\]
Since $\Pi\simeq\bigoplus_{i\ge0}\Pi_i$ as $\Lambda$-modules and $\Pi_i\otimes_\Lambda\Pi\simeq\Pi_{\ge i}(i)$ as $\Z$-graded $\Pi$-modules, we have 
\[
\Pi\otimes_\Lambda \Pi\simeq\bigoplus_{i\ge 0} \Pi_{i}\otimes_\Lambda \Pi\simeq\bigoplus_{i\ge 0} \Pi_{\ge i}(i) =\Pi\oplus U\]
in $\mod^{\Z}\Pi$. Thus the assertion follows.
\end{proof}

Let $\underline{\Gamma}:=\underline{\End}_\Pi^\Z(T)$ be the endomorphism ring of $T$ in $\underline{\mod}^\Z\Pi$. Then we have a triangle equivalence $\underline{\mod}^\Z \Pi\simeq \Db(\mod \underline{\Gamma})$ sending $T$ to $\underline{\Gamma}$ \cite[Corollary 4.7]{Y}.
By \cite[Theorem 4.6]{Y}, $\underline{\Gamma}$ is isomorphic to the stable $d$-Auslander algebra $\underline{\End}_\Lambda(\Pi)$ of $\Lambda$, which justifies our choice of notation. 
Thus $\gldim\underline{\Gamma}\le d+1$ holds by \cite[Theorem 2.23]{IO}.

Let $\CC_{d+1}(\underline{\Gamma})$ be the $(d+1)$-cluster category of $\underline{\Gamma}$, and $\pi:\Db(\mod\underline{\Gamma})\to\CC_{d+1}(\underline{\Gamma})$ the canonical functor \cite{Am,G}.
By \cite[Proposition 4.14, Theorem 4.15]{IO}, we have a triangle equivalence $\underline{\mod}\,\Pi\simeq\CC_{d+1}(\underline{\Gamma})$ making the following diagram commutative.
\[\xymatrix@R1.5em{
\underline{\mod}^{\Z}\Pi\ar[r]\ar[d]^{\rm forget}&\Db(\mod\underline{\Gamma})\ar[d]^\pi\\
\underline{\mod}\,\Pi\ar[r]&\CC_{d+1}(\underline{\Gamma}).
}\]
In particular, $\CC_{d+1}(\underline{\Gamma})$ is Hom-finite, and hence $\CC_{d+1}(\underline{\Gamma})$ is a $(d+1)$-Calabi-Yau triangulated category with  
 $(d+1)$-cluster tilting object $\pi \underline{\Gamma}$ \cite[Theorem 4.9]{G}.
Consequently, we obtain the following observation.



\begin{proposition}[cf.\ \cite{IO}]\label{cluster tilting}
The stable category $\underline{\mod}\,\Pi$ admits $(d+1)$-cluster tilting objects $T$ and $U$ given in Proposition \ref{tilting}.  In particular, $\Pi\otimes_\Lambda\Pi$ is a $(d+1)$-cluster tilting $\Pi$-module.
\end{proposition}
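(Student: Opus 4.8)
The plan is to deduce Proposition \ref{cluster tilting} directly from Proposition \ref{tilting} together with the machinery recalled just above it, so the proof will mostly be an organisation of known results rather than a new computation. First I would observe that $T$ and $U$ are tilting objects in $\underline{\mod}^\Z\Pi$ by Proposition \ref{tilting}, and that they carry the same information in the cluster category: applying the forgetting functor $\underline{\mod}^\Z\Pi\to\underline{\mod}\,\Pi$ along the bottom of the commutative square sends a tilting object of $\underline{\mod}^\Z\Pi$ to a $(d+1)$-cluster tilting object of $\underline{\mod}\,\Pi$. Concretely, since the triangle equivalence $\underline{\mod}^\Z\Pi\simeq\Db(\mod\underline\Gamma)$ sends $T$ to $\underline\Gamma$, the composite equivalence $\underline{\mod}\,\Pi\simeq\CC_{d+1}(\underline\Gamma)$ in the square sends (the image of) $T$ to $\pi\underline\Gamma$, which is a $(d+1)$-cluster tilting object of $\CC_{d+1}(\underline\Gamma)$ by \cite[Theorem 4.9]{G}. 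Hence $T$ is a $(d+1)$-cluster tilting object of $\underline{\mod}\,\Pi$.

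For $U$ I would argue the same way. By construction $U=\Omega T(1)$, and the degree-shift functor $(1)$ together with the syzygy functor $\Omega$ are both autoequivalences of $\underline{\mod}^\Z\Pi$ that descend, via the equivalence with $\Db(\mod\underline\Gamma)$ and then to $\CC_{d+1}(\underline\Gamma)$, to autoequivalences of the $(d+1)$-cluster category (the shift $(1)$ corresponds to the inverse $(d+1)$-shift/AR-type autoequivalence, and $\Omega$ to the suspension's inverse). Autoequivalences preserve $(d+1)$-cluster tilting objects, so the image of $U$ in $\underline{\mod}\,\Pi$ is again $(d+1)$-cluster tilting; alternatively one simply notes that $U$, being a tilting object of $\underline{\mod}^\Z\Pi$, maps under the forgetting functor to a $(d+1)$-cluster tilting object exactly as $T$ did, using the same commutative square. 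This settles both statements about $T$ and $U$.

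Finally I would translate the statement about $U$ into one about an actual module. Proposition \ref{tilting} already provides the isomorphism $U\simeq\Pi\otimes_\Lambda\Pi$ in $\mod^\Z\Pi$, hence in $\mod\Pi$ after forgetting the grading; note that $\Pi\otimes_\Lambda\Pi$ decomposes as $\Pi\oplus U$ with $\Pi$ projective, so adding or removing the projective summand does not affect being $(d+1)$-cluster tilting in $\mod\Pi$, and the relation between $(d+1)$-cluster tilting objects of $\underline{\mod}\,\Pi$ and $(d+1)$-cluster tilting modules of $\mod\Pi$ (one adds all the indecomposable projective-injectives; here $\Pi$ is self-injective of finite global-dimension-type behaviour only in the higher sense, but the relevant fact is the standard correspondence for $(d+1)$-cluster tilting subcategories containing $\add\Pi$). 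Therefore $\add(\Pi\otimes_\Lambda\Pi)$ is the preimage in $\mod\Pi$ of the $(d+1)$-cluster tilting subcategory $\add U\subseteq\underline{\mod}\,\Pi$ together with $\add\Pi$, which is precisely the defining property of a $(d+1)$-cluster tilting $\Pi$-module. The main obstacle, and the only point requiring care, is this last bookkeeping step: making precise how a $(d+1)$-cluster tilting object of the stable category $\underline{\mod}\,\Pi$ lifts to a $(d+1)$-cluster tilting module of $\mod\Pi$, i.e. checking the vanishing $\Ext^i_\Pi(\Pi\otimes_\Lambda\Pi,\Pi\otimes_\Lambda\Pi)=0$ for $1\le i\le d$ at the level of $\mod\Pi$ rather than only stably, which follows since $\Pi$ is self-injective so that $\Ext$ over $\Pi$ agrees with stable $\Ext$ in positive degrees on modules without projective summands, and the projective summand $\Pi$ contributes nothing.
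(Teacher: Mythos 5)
Your proposal is correct and follows essentially the same route as the paper: identify the image of $T$ with $\pi\underline{\Gamma}$ via the commutative square and Guo's theorem, deduce the claim for $U$ from $U=T[-1]$ in $\underline{\mod}\,\Pi$ (shift preserves cluster tilting), and pass from the stable category to $\mod\Pi$ using $\Pi\otimes_\Lambda\Pi\simeq\Pi\oplus U$ and self-injectivity of $\Pi$. The only difference is that you spell out this last stable-to-module step (and a not-quite-justified side remark that any tilting object of $\underline{\mod}^\Z\Pi$ descends to a cluster tilting object), which the paper leaves implicit; your primary argument is sound.
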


\begin{proof}
Since the equivalence $\underline{\mod}\,\Pi\simeq\CC_{d+1}(\underline{\Gamma})$ sends $T$ to $\pi\underline{\Gamma}$, $T$ is a $(d+1)$-cluster tilting object in $\underline{\mod}\,\Pi$.  Since $U=T[-1]$, it is also $(d+1)$-cluster tilting in $\underline{\mod}\,\Pi$, which means that $U$ is $(d+1)$-cluster tilting as $\Pi$-module.
\end{proof}

\section{Extended tensor algebras of bimodules}

Let $\Lambda$ always denote a finite dimensional $k$-algebra over a field $k$. Modules are finitely generated right modules unless otherwise stated.
Let $M$ be an $\Lambda$-bimodule.
Recall that the \emph{tensor algebra} $T_\Lambda(M)$ of $M$ over $\Lambda$ is defined as the algebra
\[T_\Lambda(M):=\bigoplus\limits_{i \geq 0}^{}{M^{\otimes_{\Lambda} i}}\]
with the canonical multiplication.
We denote the $i$-th graded piece of this tensor algebra by $T_\Lambda(M)_i:=M^{\otimes_{\Lambda} i}$.
We call $M$ \emph{nilpotent} if $M^{\otimes_{\Lambda} i} =0$ for some $i>0$.

\begin{definition}
Let $\Lambda$ be a ring, $M$ a nilpotent $\Lambda$-bimodule, and $T=T_\Lambda(M)$ the tensor algebra. The \emph{extended tensor algebra} is defined as the ring
\[U_\Lambda(M):=\bigoplus\limits_{i \geq 0}^{}{\Hom_\Lambda(T,T \otimes_\Lambda T_i)}\]
with the following multiplication for 
$f \in \Hom_\Lambda(T,T \otimes_\Lambda T_i)$ and $g \in \Hom_\Lambda(T,T \otimes_\Lambda T_j)$,
\begin{align}\label{define gf}
g \cdot f:= [T \xrightarrow{f} T \otimes_\Lambda T_i \xrightarrow{g \otimes_\Lambda \id_{T_i}} T \otimes_\Lambda T_j \otimes_\Lambda T_i = T \otimes_\Lambda T_{i+j}].
\end{align}
Note that $U_\Lambda(M)$ is graded with $U_\Lambda(M)_i=\Hom_\Lambda(T,T\otimes_\Lambda T_i)$.
\end{definition}


The main result in this section is the following theorem:
\begin{theorem} \label{tensormainresult}
Let $\Lambda$ be a ring, and $M$ a nilpotent $\Lambda$-bimodule which is finitely generated as a $\Lambda$-module. Then we have an isomorphism of $k$-algebras:
$$\End_{T_\Lambda(M)}(T_\Lambda(M) \otimes_\Lambda T_\Lambda(M)) \cong U_\Lambda(M).$$
\end{theorem}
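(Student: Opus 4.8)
\textbf{Proof proposal for Theorem \ref{tensormainresult}.}

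The plan is to set $T:=T_\Lambda(M)$ and construct mutually inverse graded algebra homomorphisms between $\End_T(T\otimes_\Lambda T)$ and $U_\Lambda(M)$, respecting the $\Z$-gradings $\End_T(T\otimes_\Lambda T)=\bigoplus_i\Hom_T(T\otimes_\Lambda T,T\otimes_\Lambda T_i)$ (using that $T\otimes_\Lambda T=\bigoplus_i T\otimes_\Lambda T_i$ and each $T\otimes_\Lambda T_i$ is a finitely generated projective $T$-module) and $U_\Lambda(M)=\bigoplus_i\Hom_\Lambda(T,T\otimes_\Lambda T_i)$. The natural bridge is the adjunction / extension-of-scalars bijection: a right $T$-module map $T\otimes_\Lambda T\to T\otimes_\Lambda T_i$ is freely generated over $T$ by its restriction along $\Lambda\to T$ on the left tensor factor, i.e. by a $\Lambda$-module map $T\to T\otimes_\Lambda T_i$ (where the source $T$ is viewed as the right $\Lambda$-module $\Lambda\otimes_\Lambda T\subset T\otimes_\Lambda T$, and in fact as a left-degree-zero slice). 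So first I would make precise the canonical $k$-linear isomorphism
\[\Phi_i\colon \Hom_T(T\otimes_\Lambda T,\,T\otimes_\Lambda T_i)\ \xrightarrow{\ \sim\ }\ \Hom_\Lambda(T,\,T\otimes_\Lambda T_i)\]
sending $F$ to its composite with the inclusion $T=\Lambda\otimes_\Lambda T\hookrightarrow T\otimes_\Lambda T$; its inverse sends $f\colon T\to T\otimes_\Lambda T_i$ to the unique $T$-linear extension $\widehat f$ with $\widehat f(x\otimes y)=\big((\text{left mult. by }x)\otimes\id\big)(f(y))$ for $x,y\in T$. Summing over $i$ gives a $k$-linear bijection $\Phi=\bigoplus_i\Phi_i\colon\End_T(T\otimes_\Lambda T)\to U_\Lambda(M)$.

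Next I would verify that $\Phi$ is multiplicative, and here one has to be careful with the order of composition, since the paper uses $g\circ f$ for "first $f$, then $g$" and the multiplication in $U_\Lambda(M)$ in \eqref{define gf} is $g\cdot f$ with $f$ applied first. For $F\in\Hom_T(T\otimes_\Lambda T,T\otimes_\Lambda T_i)$ and $G\in\Hom_T(T\otimes_\Lambda T,T\otimes_\Lambda T_j)$, the product in $\End_T(T\otimes_\Lambda T)$ corresponding to "$F$ then $G$" is the composite $T\otimes_\Lambda T\xrightarrow{F}T\otimes_\Lambda T_i=(T\otimes_\Lambda T)\otimes_\Lambda T_i\xrightarrow{G\otimes\id_{T_i}}(T\otimes_\Lambda T_j)\otimes_\Lambda T_i=T\otimes_\Lambda T_{i+j}$, which is exactly the $T$-linear version of \eqref{define gf}. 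So the key computation is: if $f=\Phi_i(F)$ and $g=\Phi_j(G)$, then $\Phi_{i+j}$ of that composite equals $g\cdot f$ as in \eqref{define gf}. This is a diagram chase using the explicit formula for $\widehat{(-)}$: restrict the composite $(G\otimes\id_{T_i})\circ\widehat f$ to $\Lambda\otimes_\Lambda T$, expand $\widehat f$ on $1\otimes y$, then push the result through $G\otimes\id$ and recognize $G$ applied to an element of the form $x\otimes z$ as $((\text{left mult. }x)\otimes\id)(g(z))$. One also checks $\Phi$ sends the identity of $T\otimes_\Lambda T$ to the identity of $U_\Lambda(M)$, namely the map $T\to T\otimes_\Lambda T_0=T$ that is the identity in component $0$ and zero elsewhere.

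The nilpotence and finite-generation hypotheses enter only to guarantee everything is well behaved: nilpotence of $M$ makes $T$ finite-dimensional-in-each-degree (indeed $T$ itself is a finite-dimensional $\Lambda$-module when $M$ is finitely generated as a $\Lambda$-module and nilpotent), so $T\otimes_\Lambda T=\bigoplus_{i}T\otimes_\Lambda T_i$ is a finite direct sum of finitely generated projective $T$-modules and $\End_T$ decomposes as the claimed graded $\Hom$; this is what lets $\Phi$ be a genuine algebra isomorphism rather than just a map on graded pieces. I do not expect a serious obstacle; the only delicate point is bookkeeping the left-module structure on the first tensor factor of $T\otimes_\Lambda T$ versus the right $T$-module structure on the whole, and tracking the composition-order convention so that \eqref{define gf} comes out on the nose. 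I would present the argument by (i) stating the adjunction isomorphism $\Phi_i$ with explicit forward and inverse formulas, (ii) checking $\Phi$ is unital, (iii) the multiplicativity diagram chase, and (iv) concluding $\Phi$ is a graded $k$-algebra isomorphism.
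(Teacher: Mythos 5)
Your outline is the same as the paper's (an extension-of-scalars adjunction relating $\Hom_\Lambda(T,T\otimes_\Lambda T)$ to $\End_T(T\otimes_\Lambda T)$, a graded decomposition, and a multiplicativity check against \eqref{define gf}), but you have set the adjunction up on the wrong tensor factor, and as written the central map fails. With the paper's conventions (all modules are right modules, so $T\otimes_\Lambda T$ is a right $T$-module via the rightmost factor), $T\otimes_\Lambda T$ is the scalar extension of the right $\Lambda$-module $T$ embedded as the slice $T\otimes 1=T\otimes_\Lambda T_0$; hence a right $T$-linear map $F$ is determined by $F(-\otimes 1)$, and the inverse of the adjunction sends $f$ to $(1\otimes m)\circ(f\otimes 1)$, i.e.\ $x\otimes y\mapsto f(x)\cdot y$. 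Your $\Phi_i$ restricts instead along the other slice $\Lambda\otimes_\Lambda T=1\otimes T$. Since $1\otimes y=(1\otimes 1)\cdot y$, that restriction only remembers $F(1\otimes 1)$, and it is genuinely not injective: writing $T_{\ge1}:=\bigoplus_{i\ge1}T_i$, for any nonzero nilpotent $M$ the projection of $T\otimes_\Lambda T=(T_0\otimes_\Lambda T)\oplus(T_{\ge1}\otimes_\Lambda T)$ onto the second summand is a nonzero right $T$-endomorphism with $F(1\otimes -)=0$. Correspondingly, your inverse formula $\widehat f(x\otimes y)=\bigl((\text{left mult.\ by }x)\otimes\id\bigr)(f(y))$ is not well defined over $\otimes_\Lambda$: it needs $f(\lambda y)=\lambda f(y)$, whereas the graded pieces of $U_\Lambda(M)$ consist of \emph{right} $\Lambda$-module maps (indeed \eqref{define gf} needs right linearity of $g$ for $g\otimes_\Lambda\id_{T_i}$ to make sense). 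Moreover the decomposition you start from, $\End_T(T\otimes_\Lambda T)=\bigoplus_i\Hom_T(T\otimes_\Lambda T,T\otimes_\Lambda T_i)$, is ill-formed: $T_i$ is only a $\Lambda$-bimodule, so $T\otimes_\Lambda T_i$ carries no right $T$-module structure, and $\bigoplus_i T\otimes_\Lambda T_i$ is not a decomposition of right $T$-modules (the right-module decomposition is $\bigoplus_i T_i\otimes_\Lambda T$); the assertion that each $T\otimes_\Lambda T_i$ is finitely generated projective over $T$ is likewise unjustified and unnecessary.

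The repair is exactly the paper's route: apply the adjunction once to the whole module to get $\alpha\colon\Hom_\Lambda(T,T\otimes_\Lambda T)\simeq\End_T(T\otimes_\Lambda T)$ with $\alpha(f)=(1\otimes m)\circ(f\otimes 1)$, and decompose only the $\Lambda$-Hom side, $\Hom_\Lambda(T,T\otimes_\Lambda T)\cong\bigoplus_i\Hom_\Lambda(T,T\otimes_\Lambda T_i)=U_\Lambda(M)$, using nilpotence of $M$ and the fact that $T$ is finitely generated as a $\Lambda$-module. With this corrected $\alpha$, your multiplicativity step is the right computation and goes through as a direct diagram chase showing $\alpha(g)\alpha(f)=\alpha(g\cdot f)$ for $g\cdot f$ as in \eqref{define gf}. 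So the plan is sound and matches the paper, but the key isomorphism as you wrote it (restriction along $1\otimes T$, extension by left multiplication on the first factor) is wrong; it must be restriction along $T\otimes 1$, with extension by right multiplication on the second factor.
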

\begin{proof}
For simplicity, in this proof, we denote the algebra $T_\Lambda(M)$ by $T$, $T_\Lambda(M)_i$ by $T_i$ and $\otimes_\Lambda$ by $\otimes$. 
For any $(T,\Lambda)$-module $X$ and $T$-bimodule $Y$, we have a canonical isomorphism of $T$-bimodules 
\[\Hom_\Lambda(X,Y)\simeq\Hom_T(X\otimes T,Y).\]
In particular, we have a canonical isomorphism of $T$-bimodules
\begin{align}\label{regard End as Hom}
\alpha:\Hom_\Lambda(T, T \otimes T)\cong\Hom_T(T \otimes T, T \otimes T)=\End_T(T \otimes T).
\end{align}
Since $M$ is nilpotent and finitely generated as a $\Lambda$-module, $T$ is also finitely generated as a $\Lambda$-module. Thus we have
\[
\Hom_\Lambda(T, T \otimes T) \cong \Hom_\Lambda(T, \bigoplus\limits_{i \geq 0}^{} (T \otimes T_i))=\bigoplus\limits_{i \geq 0}^{}{\Hom_\Lambda(T,T \otimes T_i)}=U_\Lambda(M).
\]
It remains to show that the isomorphism is compatible with multiplication.
Fix $f\in\Hom_\Lambda(T,T\otimes T_i)\subset\Hom_\Lambda(T,T\otimes T)$ and $g\in\Hom_\Lambda(T,T\otimes T_j)\subset\Hom_\Lambda(T,T\otimes T)$, and consider $g\cdot f\in\Hom_\Lambda(T,T\otimes T_{i+j})\subset\Hom_\Lambda(T,T\otimes T)$ given in \eqref{define gf}.
Then we have the following, where we write $1$ for the respective identity, and $m$ for the respective multiplication map. 
\begin{align*}
\alpha(f)&=[T\otimes T\xrightarrow{f\otimes 1}T\otimes T_i\otimes T\xrightarrow{1\otimes m}T\otimes T],\\
\alpha(g)&=[T\otimes T\xrightarrow{g\otimes 1}T\otimes T_j\otimes T\xrightarrow{1\otimes m}T\otimes T],\\
\alpha(g \cdot f)&=[T\otimes T\xrightarrow{g\cdot f\otimes 1}T\otimes T_{i+j}\otimes T\xrightarrow{1\otimes m}T\otimes T]\\
&\stackrel{\eqref{define gf}}{=}[T\otimes T\xrightarrow{f\otimes 1} T \otimes T_i \otimes T\xrightarrow{g \otimes 1\otimes 1} T \otimes T_j \otimes T_i\otimes T = T \otimes T_{i+j}\otimes T\xrightarrow{1\otimes m}T\otimes T].
\end{align*}
The following commutative diagram shows $\alpha(g)\alpha(f)=\alpha(g\cdot f)$ as desired.
\[\begin{tikzcd}
[column sep=.6cm,row sep=.3cm]
\tikzset{
    equ/.style={-,double equal sign distance},
  }
	{T \otimes T} && {T \otimes T_i \otimes T} && {T \otimes T}&& {T \otimes T_j \otimes T} && {T \otimes T} \\
	&&&& {T \otimes T_j \otimes T_i \otimes T} \\
	\\
	{T \otimes T } && {T \otimes T_i \otimes T} && {T \otimes T_j \otimes T_i \otimes T} && {T \otimes T_{i+j} \otimes T} && {T \otimes T}
	\arrow["{g \otimes 1}", from=1-5, to=1-7]
	\arrow["{f \otimes 1}", from=1-1, to=1-3]
	\arrow[equal, from=1-1, to=4-1]
	\arrow["{1\otimes m}", from=1-3, to=1-5]
	\arrow["{g \otimes 1 \otimes 1}"', from=1-3, to=2-5]
	\arrow[equal, from=1-3, to=4-3]
	\arrow["{1 \otimes m}", from=1-7, to=1-9]
	\arrow[equal, from=1-9, to=4-9]
	\arrow["{1 \otimes 1 \otimes m}"', from=2-5, to=1-7]
	\arrow[equal, from=2-5, to=4-5]
	\arrow["{f \otimes 1}", from=4-1, to=4-3]
	\arrow["{g \otimes 1 \otimes 1}", from=4-3, to=4-5]
	\arrow[equal, from=4-5, to=4-7]
	\arrow["1\otimes m", from=4-7, to=4-9]
\end{tikzcd}\qedhere\]
\end{proof}

For an additive category $\CC$, an object $C\in\CC$ and a covariant functor $F:\CC\to\Ab$, by applying Yoneda's Lemma we obtain bijections

\begin{equation} \label{yoneda}
F(C)\simeq\Hom(\CC(C,-),F)\simeq \CC(-,C)\otimes_\CC F.
\end{equation}

\begin{proposition}\label{tensormainresult2}
We have an isomorphism of rings:
\[U_\Lambda(M)\cong T_{U_\Lambda(M)_0}(U_\Lambda(M)_1).\]
\end{proposition}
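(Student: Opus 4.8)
The plan is to show that the graded algebra $U_\Lambda(M)$ is generated in degrees $0$ and $1$, and that there are no relations among the degree-$1$ generators beyond those forced by the $U_\Lambda(M)_0$-bimodule structure; together these give the tensor algebra isomorphism. Concretely, recall $U_\Lambda(M)_i=\Hom_\Lambda(T,T\otimes_\Lambda T_i)$ where $T=T_\Lambda(M)$, and by Theorem \ref{tensormainresult} we may work inside $\End_T(T\otimes_\Lambda T)$, where multiplication is honest composition. I would first identify $U_\Lambda(M)_0=\End_\Lambda(T)=\End_T(T\otimes_\Lambda T,\ T\otimes_\Lambda\Lambda)$ acting on $T\otimes_\Lambda T$ through the left tensor factor, and observe that the grading $(\Pi\otimes\Pi)_i=T\otimes_\Lambda T_i$ makes each $U_\Lambda(M)_i$ a $U_\Lambda(M)_0$-sub-bimodule of $\End_T(T\otimes_\Lambda T)$.

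The key step is a multiplication-surjectivity statement: the canonical map $U_\Lambda(M)_1^{\otimes_{U_\Lambda(M)_0} i}\to U_\Lambda(M)_i$ is an isomorphism for all $i$. For this I would use that $M$ is finitely generated as a left $\Lambda$-module, so we may choose, for each $i$, a right $\Lambda$-module decomposition and a set of ``degree-one'' maps $T\to T\otimes_\Lambda M$ — namely, the components of the structure map coming from the algebra multiplication $T\otimes_\Lambda M\to T$ dualized via the Hom-tensor adjunction — whose iterated composites span $U_\Lambda(M)_i$. The cleanest route is the Yoneda reformulation \eqref{yoneda}: writing $\CC=(\mod\Lambda)$ (or the relevant additive category generated by the graded pieces of $T$), one has $\Hom_\Lambda(T,T\otimes_\Lambda T_i)\cong \CC(-,T)\otimes_\CC (T\otimes_\Lambda T_i)$, and the tensor-algebra structure of $T$ itself, $T_i=M^{\otimes_\Lambda i}=T_1^{\otimes_\Lambda i}$, propagates through $\Hom_\Lambda(T,-)$ precisely because $T$ is a finitely generated projective-free... — more carefully, because $T\otimes_\Lambda(-)$ is right exact and commutes with the relevant colimits, and $\Hom_\Lambda(T,-)$ applied to $T\otimes_\Lambda T_i=(T\otimes_\Lambda T_1)\otimes_\Lambda M^{\otimes_\Lambda(i-1)}$ unwinds, using adjunction, into an $i$-fold composite of copies of $\Hom_\Lambda(T,T\otimes_\Lambda T_1)$ over $\End_\Lambda(T)=U_\Lambda(M)_0$. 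I would make this precise by induction on $i$: the inductive step is the natural iso $\Hom_\Lambda(T,T\otimes_\Lambda T_{i})\cong \Hom_\Lambda(T,T\otimes_\Lambda T_1)\otimes_{\End_\Lambda T}\Hom_\Lambda(T,T\otimes_\Lambda T_{i-1})$, which one checks is exactly the multiplication map $U_\Lambda(M)_1\otimes_{U_\Lambda(M)_0}U_\Lambda(M)_{i-1}\to U_\Lambda(M)_i$ from \eqref{define gf}.

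Once surjectivity and injectivity of this iterated multiplication are in hand for every $i$, the universal property of the tensor algebra gives a graded algebra homomorphism $T_{U_\Lambda(M)_0}(U_\Lambda(M)_1)\to U_\Lambda(M)$ which is the identity in degrees $0,1$ and an isomorphism in every degree, hence an isomorphism of rings. The main obstacle I anticipate is the injectivity half of the inductive step — i.e.\ showing the multiplication map $U_\Lambda(M)_1\otimes_{U_\Lambda(M)_0}U_\Lambda(M)_{i-1}\to U_\Lambda(M)_i$ is not merely surjective but bijective. Surjectivity is essentially formal from $T_i=T_1\otimes_\Lambda T_{i-1}$ together with right-exactness of $T\otimes_\Lambda(-)$ and additivity of $\Hom_\Lambda(T,-)$; injectivity requires that the adjunction iso $\Hom_\Lambda(T,T\otimes_\Lambda T_1\otimes_\Lambda T_{i-1})\cong\Hom_{?}(\ ?\ ,\ ?\ )$ correctly matches the $\End_\Lambda(T)$-balancing in the tensor product over $U_\Lambda(M)_0$, which is where I would spend the most care — likely by exhibiting the inverse map explicitly using that $T\otimes_\Lambda T_{i-1}$ is, as a $(\Lambda,\Lambda)$-bimodule summand-wise, built from the same generators, and invoking the finite generation of $M$ over $\Lambda$ to reduce to a finite free situation. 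The hereditary/Dynkin hypotheses are not needed here; nilpotence and one-sided finite generation of $M$ suffice, which is consistent with the generality of the statement.
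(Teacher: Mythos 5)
Your overall skeleton matches the paper's: reduce the statement to showing that the multiplication map $U_1\otimes_{U_0}U_i\to U_{i+1}$ (with $U_i:=U_\Lambda(M)_i$, $U_0=\End_\Lambda(T)$) is an isomorphism, and invoke the Yoneda bijection \eqref{yoneda}. But the proposal stops exactly where the content of the proposition lies: you yourself flag the bijectivity (in particular injectivity) of this map as "the main obstacle," and the route you sketch for it — exhibiting an inverse by hand and "invoking the finite generation of $M$ over $\Lambda$ to reduce to a finite free situation" — is not a viable substitute: $M$ need not be projective on either side, $T\otimes_\Lambda(-)$ is not exact, and finite generation of $M$ plays no role in this step (it is only used in Theorem \ref{tensormainresult} to commute $\Hom_\Lambda(T,-)$ past the direct sum). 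Your invocation of \eqref{yoneda} is also misapplied: you write $\Hom_\Lambda(T,T\otimes_\Lambda T_i)\cong\CC(-,T)\otimes_\CC(T\otimes_\Lambda T_i)$ with $\CC=\mod\Lambda$, putting an object where a functor should be and the wrong object in the representable slot; moreover over $\CC=\mod\Lambda$ the functor tensor product does not reduce to a tensor product over the ring $\End_\Lambda(T)$, which is precisely the "balancing" issue you worry about.

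The missing idea that makes the argument close is the following. Since $T\otimes_\Lambda T_1=T\otimes_\Lambda M\cong\bigoplus_{j\ge1}T_j$, this module is a direct summand of $T$ as a right $\Lambda$-module, i.e.\ an object of $\CC:=\add T_\Lambda\subset\mod\Lambda$. Now apply \eqref{yoneda} with this small category $\CC$, the object $C:=T\otimes_\Lambda T_1\in\CC$, and the covariant functor $F:=\Hom_\Lambda(T,-\otimes_\Lambda T_i):\CC\to\Ab$. Because $T$ is an additive generator of $\CC$, the functor tensor product $\CC(-,C)\otimes_\CC F$ is literally $\Hom_\Lambda(T,C)\otimes_{\End_\Lambda(T)}F(T)=U_1\otimes_{U_0}U_i$, while $F(C)=\Hom_\Lambda(T,T\otimes_\Lambda T_{i+1})=U_{i+1}$; the resulting isomorphism is the evaluation map $f\otimes g\mapsto(f\otimes\id_{T_i})\circ g$, i.e.\ exactly the multiplication of \eqref{define gf}. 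This gives surjectivity and injectivity simultaneously, with no induction, no explicit inverse, and no freeness reduction; the restriction to $\add T_\Lambda$ is what resolves the balancing concern. Without this observation (that $C$ lies in $\add T_\Lambda$ and that one should tensor over $\add T_\Lambda$ rather than $\mod\Lambda$), your proof as written has a genuine gap at its central step.
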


\begin{proof}
Let $T=T_\Lambda(M)$, $T_i=M^{\otimes_\Lambda i}$ and $U_i:=U_\Lambda(M)_i$ for $i\ge0$.  For $i\ge1$, it suffices to show that the natural map $U_1\otimes_{U_0}U_i\to U_{i+1}$ is an isomorphism.
Now we consider the category $\CC:=\add T_\Lambda \subset \mod\Lambda$ and its object $C:=T\otimes_\Lambda T_1\in\CC$. Applying \eqref{yoneda} to the functor $F:=\Hom_\Lambda(T,-\otimes_AT_i):\CC\to\Ab$,
we have
\[
U_{i+1}=\Hom_\Lambda(T,T\otimes_\Lambda T_{i+1})=F(C)\simeq 
\Hom_\Lambda(T,T\otimes_\Lambda T_1)\otimes_{\End_\Lambda(T)}\Hom_\Lambda(T,T\otimes_\Lambda T_i).\]
The right-hand space is precisely the $(i+1)$-st graded piece of the tensor algebra in the claim.
This is precisely the desired isomorphism.
\end{proof}

Now we are able to prove Theorem \ref{main d}.

\begin{proof}[Proof of Theorem \ref{main d}]
(a) Let $\Lambda$ be a $\tau_d$-finite algebra.  Then we have an $\Lambda^e$-module $M:=\Ext^d_\Lambda(D\Lambda,\Lambda)$ and $T_\Lambda(M)=\Pi$, see for example \cite[Section 2]{IO}.  Since $\Pi \otimes_\Lambda \Pi_1 = \tau_d^{-1}(\Pi)$, we have $U_\Lambda(M)=\Psi_\Lambda^\Pi=\Psi$ by definition. Thus, the claim follows from Theorem \ref{tensormainresult}.

(b) Since $\Pi \otimes_\Lambda\Pi $ is a $(d+1)$-cluster tilting $\Pi $-module 
by Proposition \ref{cluster tilting}, its endomorphism algebra-- which is isomorphic to $\Psi$ by (a) -- is a $(d+1)$-Auslander algebra.

(c) This is immediate from (a) and Proposition \ref{tensormainresult2}.
\end{proof}

\section{Presentation of some tensor algebras}

Now we consider the following general setting.
\begin{enumerate}[\rm$\bullet$]
\item Let $\Lambda=kQ/I$ be a finite dimensional algebra with admissible ideal $I$.
\item Let $f\in\Lambda$ be an idempotent, and $\phi:\Lambda\to f\Lambda f$ a morphism of $k$-algebras such that, for each $i\in Q_0$, $\phi(e_i)$ is either zero or $e_{\phi(i)}$ for some $\phi(i)\in Q_0$.
\end{enumerate}

In this case, $f=\phi(1)=\sum_{i\in S}e_{\phi(i)}$ holds for $S:=\{i\in Q_0\mid \phi(e_i)\neq0\}$.

\begin{proposition}\label{presentation}
Assume that $\phi:\Lambda\to f\Lambda f$ is a morphism of algebras satisfying the aforementioned assumption.
Consider the $\Lambda$-bimodule $f\Lambda$ with the usual right action the left action induced by $\phi$. 
Then we have an isomorphism of algebras
\[T_\Lambda(f\Lambda)\simeq k\widetilde{Q}/(I,r_a\mid a\in Q_1),\]
where $\widetilde{Q}:=Q\sqcup\{ q_i:\phi(i)\to i\mid i\in S\}$ and for each arrow $a:i\to j$ of $Q$,
\[k\widetilde{Q}/(I)\ni r_a:=\left\{\begin{array}{ll}aq_i-q_j\phi(a)&i,j\in S,\\
aq_i&i\in S,\ j\notin S,\\
0&i\notin S.
\end{array}\right.\]
\end{proposition}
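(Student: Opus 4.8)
The plan is to identify $T_\Lambda(f\Lambda)$ explicitly as a quotient of the path algebra of $\widetilde{Q}$, using that the tensor algebra is built out of iterated tensor powers of the bimodule $f\Lambda$. First I would unwind the structure of the bimodule $N:=f\Lambda$: as a right $\Lambda$-module it is a quotient of $\bigoplus_{i\in S}e_{\phi(i)}\Lambda$, i.e.\ a direct sum of projectives, and the left action of $e_i\in\Lambda$ is via multiplication by $\phi(e_i)=e_{\phi(i)}$ (or zero). Consequently, for $i\in S$ there is a canonical element $q_i:=\phi(e_i)=e_{\phi(i)}\in f\Lambda e_i\subseteq N$, and these elements, together with $Q_1$, generate $T_\Lambda(N)$ as a $k$-algebra: indeed $N=\sum_{i\in S} q_i\Lambda$, so $N$ is generated as a bimodule by $\{q_i\}$, and the tensor algebra is generated by $\Lambda$ together with $N$. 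This produces a surjective $k$-algebra homomorphism $\pi:k\widetilde{Q}\to T_\Lambda(N)$ sending each arrow of $Q$ to its image in $\Lambda=T_\Lambda(N)_0$ and each $q_i$ to the corresponding element of $N=T_\Lambda(N)_1$.

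Next I would check that $\pi$ kills the stated relations. The relations in $I$ are killed because $\Lambda=kQ/I$ sits inside $T_\Lambda(N)$ in degree $0$. For the relations $r_a$: given an arrow $a:i\to j$ of $Q$, the element $aq_i$ lives in $N$ and equals (in $N=f\Lambda$) the product $f\Lambda e_i \ni q_i$ acted on the left by $a$, which is $\phi(a)q_i\cdots$ — more precisely, using that the left action is through $\phi$, one computes $a\cdot q_i = \phi(a)\cdot(\text{something})$; when $j\in S$ this says $aq_i$ and $q_j\phi(a)$ represent the same element of $f\Lambda$ (both equal $\phi(a)\in f\Lambda e_i$, since $q_j=e_{\phi(j)}$ and $q_i=e_{\phi(i)}$ are idempotents bracketing $\phi(a)\in e_{\phi(j)}\Lambda e_{\phi(i)}$), and when $j\notin S$ we have $f e_j=0$ so $aq_i\in fe_j\Lambda e_i=0$; when $i\notin S$ there is no generator $q_i$ at all, consistent with $r_a=0$. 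So $\pi$ factors through $\overline{\pi}:k\widetilde{Q}/(I,r_a\mid a\in Q_1)\to T_\Lambda(N)$.

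The main work — and the main obstacle — is injectivity of $\overline{\pi}$, i.e.\ showing no further relations are needed. The clean way is a dimension/basis count degree by degree in the $\widetilde{Q}$-grading where arrows of $Q$ have degree $0$ and the $q_i$ have degree $1$ (this matches the tensor-algebra grading). In degree $0$ both sides are $\Lambda=kQ/I$, where the relations $r_a$ with $i\notin S$ contribute nothing new. For the degree-$1$ part $N=f\Lambda$: in the quotient $k\widetilde{Q}/(I,r_a)$, degree-$1$ paths are of the form $p\, q_i\, p'$ with $p',p$ paths in $Q$; the relations $r_a$ let one push all arrows to the \emph{left} of the single $q_i$ whenever the source stays in $S$, and kill the path once an arrow with target outside $S$ is applied on the left of $q_i$ — so the degree-$1$ part is spanned by classes $p\,q_i$ with $p$ a path in $Q$ starting at $\phi(i)\in S$, modulo $I$, which is exactly a spanning set for $f\Lambda=\bigoplus_{i\in S}e_{\phi(i)}\Lambda/(\cdots)$ mapping bijectively under $\overline\pi$. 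In higher degree $m$ one argues inductively: a degree-$m$ path is a product $w_1 q_{i_1} w_2 q_{i_2}\cdots w_m q_{i_m} w_{m+1}$ and the relations $r_a$ allow normalizing it so that between consecutive $q$'s one has a reduced "staircase" representative, matching the standard $k$-basis of $N^{\otimes_\Lambda m}=(f\Lambda)\otimes_\Lambda\cdots\otimes_\Lambda(f\Lambda)$ obtained from the bimodule presentation of $f\Lambda$. Concretely I would set up the bimodule presentation $\bigoplus_{a\in Q_1,\, i\in S}(e_{\phi(j)}\Lambda e_i\text{-part}) \to \bigoplus_{i\in S}e_{\phi(i)}\Lambda \to f\Lambda\to 0$ encoding exactly the relations $aq_i=q_j\phi(a)$ (resp.\ $aq_i=0$), observe that tensoring this presentation with itself over $\Lambda$ produces a presentation of $N^{\otimes_\Lambda m}$ whose generators and relations are precisely the degree-$m$ paths of $\widetilde Q$ modulo $(I,r_a)$, and conclude that $\overline\pi$ is an isomorphism in every degree. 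The one point requiring care is that the left $\Lambda$-action on $N$ being through $\phi$ (not the identity) is exactly what makes $r_a=aq_i-q_j\phi(a)$ the correct relation rather than $aq_i-q_j a$; keeping $\phi$ explicit throughout the path-normalization is the only genuinely delicate bookkeeping.
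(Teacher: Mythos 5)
Your overall strategy (surject $k\widetilde{Q}$ onto $T_\Lambda(f\Lambda)$, check the relations die, then show no further relations are needed) matches the shape of the paper's argument, and the easy half is essentially fine, modulo small slips: the reason $aq_i\mapsto 0$ when $j\notin S$ is that the left action is through $\phi$ and $\phi(a)=\phi(e_j)\phi(a)\phi(e_i)=0$, not that ``$fe_j=0$'' (which can fail, since a vertex outside $S$ may still lie in the image of $\phi$); in the paper's composition convention the relation $aq_i=q_j\phi(a)$ pushes $Q$-arrows from the left of a $q$ to its \emph{right} (through $\phi$), so the degree-one normal forms are $q_j\lambda$ with $\lambda\in e_{\phi(j)}\Lambda$, not $p\,q_i$ with $p$ starting at $\phi(i)$; and $f\Lambda=\bigoplus_{i\in S}e_{\phi(i)}\Lambda$ on the nose (the $\phi(e_i)$, $i\in S$, are orthogonal since $\phi$ is an algebra map, so $\phi$ is injective on $S$) -- there is no extra quotient ``$(\cdots)$'' to worry about.

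The genuine gap is the injectivity step, which you correctly identify as the main work but then only assert. Your plan is to write down a presentation of $f\Lambda$ ``encoding exactly the relations $aq_i=q_j\phi(a)$'' and to say that tensoring it with itself yields presentations of the tensor powers matching the degree-$m$ paths. But the exactness of that presentation in the middle -- i.e.\ that the kernel of the surjection from the projective $\Lambda$-bimodule $P=\bigoplus_{i\in S}\Lambda e_i\otimes_k e_{\phi(i)}\Lambda$ onto $f\Lambda$, $x\otimes y\mapsto\phi(x)y$, is generated as a bimodule by the elements $r'_a=aq'_i-q'_j\phi(a)$ -- is precisely the statement ``no further relations are needed,'' and you never prove it; likewise there is no off-the-shelf ``standard $k$-basis'' of $(f\Lambda)^{\otimes_\Lambda m}$ (the summands $e_{\phi(j)}\Lambda$ are not stable under the $\phi$-twisted left action), so the claimed independence of your staircase normal forms is unsubstantiated. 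The paper closes exactly this hole: it shows $\Ker(P\to f\Lambda)=P'$, the sub-bimodule generated by the $r'_a$, by splitting $P=\Ker\oplus V$ as right modules with $V=\bigoplus_{i\in S}ke_i\otimes e_{\phi(i)}\Lambda$ and proving $P=P'+V$ via the telescoping identity $a_m\cdots a_1q'_i=\sum_{\ell}a_m\cdots a_{\ell+1}r'_{a_\ell}\phi(a_{\ell-1}\cdots a_1)+q'_j\phi(a_m\cdots a_1)$. Once this degree-one identification is done at the bimodule level, all higher degrees come for free from $T_\Lambda(P/P')\simeq T_\Lambda(P)/(r'_a)\simeq k\widetilde{Q}/(I,r_a)$ (using $T_\Lambda(P)\simeq k\widetilde{Q}/(I)$ for the projective bimodule $P$); your degree-by-degree normalization is therefore both unnecessary and the least justified part of the proposal. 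Note also that your displayed presentation uses the one-sided projectives $\bigoplus_{i\in S}e_{\phi(i)}\Lambda$, whereas the comparison with $k\widetilde{Q}/(I)$ requires the two-sided projective $P$ above, since only there do the added arrows $q_i$ acquire their correct left $\Lambda$-action.
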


\begin{proof}
Consider the projective $\Lambda^e$-module and its element
\[P:=\bigoplus_{i\in S}\Lambda e_i\otimes_ke_{\phi(i)}\Lambda\ni q'_i:=e_i\otimes e_{\phi(i)}\ \mbox{ for each }\ i\in S.\]
Then the tensor algebra $T_\Lambda(P)$ has a presentation
\begin{equation}\label{presentation}
    \psi:k\widetilde{Q}/(I)\simeq T_\Lambda(P),
\end{equation}
where $\psi(q_i):=q'_i$ for each $i\in S$.
We have a morphism of $\Lambda^e$-modules
\[\alpha:P\to f\Lambda,\ x\otimes y\mapsto \phi(x)y.\]

For each $i\in Q_0\setminus S$, let $q'_i:=0\in P$. For each arrow $a:i\to j$ of $Q$, let
\[r'_a:=
aq'_i-q'_j\phi(a)\in P.\]
Let $P'$ be the $\Lambda^e$-submodule of $P$ generated by all $r'_a$.

We claim that $\alpha$ induces an isomorphism of $\Lambda^e$-modules:
\begin{equation}\label{fL}
\alpha:P/P'\simeq f\Lambda.
\end{equation}
It suffices to show $P'=\Ker\,\alpha$.
Since $\alpha(r'_a)=0$ holds,
we have $P'\subset\Ker\,\alpha$. To prove the reverse inclusion, consider the $\Lambda$-submodule $V:=\bigoplus_{i\in S}ke_i\otimes_ke_{\phi(i)}\Lambda\subset P$.
Since $\alpha|_V:V\to f\Lambda$ is an isomorphism, we have $P=\Ker\,\alpha\oplus V$ as $\Lambda$-modules.
It suffices to prove $P=P'+V$.
As a $\Lambda$-module, $P$ is generated by $a_m\cdots a_1q'_i$ for all paths $a_m\cdots a_1:i\to j$ in $Q$ such that $i\in S$. 
Thus it suffices to show $a_m\cdots a_1q'_i\in P'+V$. Let $i_\ell:=s(a_\ell)$ for $1\le\ell\le m$ and $i_{m+1}:=j$. Then $i_1=i$ and we have
\begin{align*}
a_m\cdots a_1q'_i&=\sum_{\ell=1}^ma_m\cdots a_{\ell+1}(a_\ell q'_{i_\ell}-q'_{i_{\ell+1}}\phi(a_\ell))\phi(a_{\ell-1}\cdots a_1)+q'_j\phi(a_m\cdots a_1)\\
&=\sum_{\ell=1}^ma_m\cdots a_{\ell+1}r'_{a_\ell }\phi(a_{\ell-1}\cdots a_1)+q'_j\phi(a_m\cdots a_1)\in P'+V.
\end{align*}
Thus we obtain $P=P'+V$ and hence $P'=\Ker\,\alpha$, as desired.


Consequently, we have $k$-algebra isomorphisms
\[T_\Lambda(f\Lambda )\stackrel{\eqref{fL}}{\simeq} T_\Lambda(P/P')\simeq T_\Lambda(P)/(r'_a\mid a\in Q_1)\stackrel{\eqref{presentation}}{\simeq} k\widetilde{Q}/(I,r_a\mid a\in Q_1),\]
where the last isomorphism follows from $\psi(r_a)=r'_a$.
\end{proof}

Now we are ready to prove Theorem \ref{mainpresentation}.

\begin{proof}[Proof of Theorem \ref{mainpresentation}]

Let $f$ be the idempotent of $\Gamma$ corresponding to the direct sum of indecomposable non-projective direct summands of the $\Lambda$-module $\Pi$, and let $\phi:\Gamma\to f\Gamma f$ be the morphism of $k$-algebras given by the $d$-Auslander-Reiten translation $\tau_d^-:\mod\Lambda\to\add f\Pi\subset\mod\Pi$.  The claim then follows immediate from Proposition \ref{presentation} with $S$ being the set of indecomposable non-injective $\Lambda$-module.
\end{proof}

\section{Examples}

\subsection{Hereditary algebras}

Let $Q$ be a Dynkin quiver and $H=kQ$.
The structure of the Auslander-Reiten quiver ${\rm AR}(H)$ of $H$ is well-known.
Let $\Z Q$ be the translation quiver with the set of vertices $\Z\times Q_0$ and two kinds of arrows $(i,a):(i,s(a))\to(i,t(a))$ and $(i,a^*):(i,t(a))\to(i+1,s(a))$ for each $(i,a)\in \Z\times Q_1$.
This gives the Auslander-Reiten quiver of the bounded derived category of $H$, and then ${\rm AR}(H)$ can be regarded as a full subquiver.
Moreover, the quiver $Q_\Gamma=(Q_{\Gamma,0},Q_{\Gamma,1})$ of the Auslander algebra $\Gamma$ of $H$ coincides with ${\rm AR}(H)$. Let $Q_{\Gamma,0}^{\rm ni}$ the set of vertices of $Q_\Gamma$ corresponding to indecomposable non-injective $H$-modules. 
Then we have a presentation
\[\Gamma\simeq kQ_\Gamma/(\sum_{a\in(\Z Q)_1}((i,a^*)(i,a)-(i+1,a)(i,a^*)))\]
For an indecomposable non-injective $H$-module $X$, let $m_X$ be the mesh relation starting at $X$, i.e.
\[
m_X = \sum_{a\in(\Z Q)_1}e_{\tau^-X}((i,a^*)(i,a)-(i+1,a)(i,a^*))e_X.
\]
Let $\widetilde{Q}_\Gamma$ be the quiver obtained by adding new arrows $q_X:\tau^-X\to X$ to $Q_\Gamma$ for each indecomposable non-injective $H$-module $X$.
By Theorem \ref{mainpresentation}, we have a presentation of the total preprojective algebra of $H$
\[\Psi\simeq k\widetilde{Q}_\Gamma/(m_X,r_a\mid X\in Q_{\Lambda,0}^{\rm ni},\,a\in Q_{\Gamma,1}),\]
where, for each arrow $a:X\to Y$ of $Q_\Gamma$,
\[r_a:=\left\{\begin{array}{ll}
aq_X-q_Y\tau^-(a)&\mbox{if $X$ and $Y$ are non-injective,}\\
aq_X&\mbox{if $X$ is non-injective and $Y$ is injective,}\\
0&\mbox{if $X$ is injective.}
\end{array}\right.\]
This description coincides with that of Geiss-Leclerc-Schr\"oer \cite[Theorem 1]{GLS} (after swapping the role of non-projective with non-injectives as they use a dual convention).

For example, let $Q$ be the quiver type $D_4$ shown on the left-hand side below. Then the quiver $Q_\Gamma$ of the Auslander algebra is the one on the right-hand side, where $d:=a^*$, $e:=b^*$ and $f:=c^*$
\[\xymatrix@C0.8cm@R0.8cm{1\ar[rd]\\ 2\ar[r]&4\\ 3\ar[ru]}
\ \ \ \ \ \xymatrix@C0.8cm@R0.8cm{
1\ar[rd]|a&&5\ar[rd]|{a'}\ar@{.}[ll]&&9\ar[rd]|{a''}\ar@{.}[ll]\\
2\ar[r]|b&4\ar[ru]|d\ar[r]|e\ar[rd]|f&6\ar[r]|{b'}\ar@{.}@/^5mm/[ll]&8\ar[ru]|{d'}\ar[r]|{e'}\ar[rd]|{f'}\ar@{.}@/^-5mm/[ll]&10\ar[r]|{b''}\ar@{.}@/^5mm/[ll]&12\ar@{.}@/^-5mm/[ll]&\\
3\ar[ru]|c&&7\ar[ru]|{c'}\ar@{.}[ll]&&11\ar[ru]|{c''}\ar@{.}[ll]&&
}\]
The relations of $\Gamma$ are $da, eb, fc, a'd+b'e+c'f, d'a', e'b', f'c', a''d'+b''e'+c''f'$. The quiver of the total preprojective algebra $\Psi$ is
\[\xymatrix@C0.8cm@R0.8cm{
1\ar[rd]|a&&5\ar[rd]|{a'}\ar[ll]_{q_1}&&9\ar[rd]|{a''}\ar[ll]_{q_5}\\
2\ar[r]|b&4\ar[ru]|d\ar[r]|e\ar[rd]|f&6\ar[r]|{b'}\ar@/^5mm/[ll]|{q_2}&8\ar[ru]|{d'}\ar[r]|{e'}\ar[rd]|{f'}\ar@/^-5mm/[ll]|{q_4}&10\ar[r]|{b''}\ar@/^5mm/[ll]|{q_6}&12\ar@/^-5mm/[ll]|{q_8}\\
3\ar[ru]|c&&7\ar[ru]|{c'}\ar[ll]^{q_3}&&11\ar[ru]|{c''}\ar[ll]^{q_7}
}\]
where the additional relations are
$$\begin{array}{l}aq_1-q_4a', bq_2-q_4b', cq_3-q_4c', dq_4-q_5d', eq_4-q_6e', fq_4-q_7f', a'q_5-q_8a'', b'q_6-q_8b'', c'q_7-q_8c'',\\
d'q_8,e'q_8,f'q_8.
\end{array}$$

\subsection{Higher Auslander algebras of type $A$}
Fix positive integers $d$ and $n$. We define the quiver $Q:=Q^{(d,n)}$ 
with the set $Q_0$ of vertices and the set $Q_1$ of arrows by
\begin{eqnarray*}
Q_0&:=&\{x=(x_1,x_2,\ldots,x_{d+1})\in\Z_{\ge0}^{d+1}\ |\ \sum_{i=1}^{d+1}x_i=n-1\},\\
Q_1&:=&\{a_{x,i}:x\to x+f_i\ |\ 1\le i\le d+1,\ x,x+f_i\in Q_0\}
\end{eqnarray*}
where $f_i$ denotes the vector $f_i:=(0,\ldots,0,\stackrel{i}{-1},\stackrel{i+1}{1},0,\ldots,0)$ for $1\le i\le d$ and $f_{d+1}:=(\stackrel{1}{1},0,\ldots,0,\stackrel{d+1}{-1})$.
The following picture shows $Q^{(d,3)}$ for $d=1,2,3$.
\[\xymatrix@C0cm@R0.2cm{
&&&&&
  &&&&&&&
    &&{\scriptstyle 0200}\ar[rd]&&\\
&&&&{\scriptstyle 02}\ar@<.5ex>[lld]&
  &&&{\scriptstyle 030}\ar[rd]&&&&
    &{\scriptstyle 1100}\ar[ru]\ar[rd]&&{\scriptstyle 0110}\ar[rd]\ar[ldd]&\\
&&{\scriptstyle 11}\ar@<.5ex>[rru]\ar@<.5ex>[lld]&&
  &&&{\scriptstyle 120}\ar[ru]\ar[rd]&&{\scriptstyle 021}\ar[ll]\ar[rd]&&&
    {\scriptstyle 2000}\ar[ru]&&{\scriptstyle 1010}\ar[ru]\ar[ldd]&&{\scriptstyle 0020}\ar[ldd]\\
{\scriptstyle 20}\ar@<.5ex>[rru] &&&&&
  &{\scriptstyle 210}\ar[ru]&&{\scriptstyle 111}\ar[ll]\ar[ru]&&{\scriptstyle 012}\ar[ll]&&
    &&{\scriptstyle 0101}\ar[rd]\ar[luu]&&\\
&&&&&
  &&&&&&&
    &{\scriptstyle 1001}\ar[ru]\ar[luu]&&{\scriptstyle 0011}\ar[luu]\ar[ldd]&\\
&&&&&
   &&&&\\
&&&&&
   &&&&&&&
    &&{\scriptstyle 0002}\ar[luu]&&
}\]
We define the $k$-algebra
\[\Pi^{(d,n)}:=kQ^{(d,n)}/I^{(d,n)},\]
where $I^{(d,n)}$ is the ideal defined by the following relations which we denote by $r_{x,i,j}$:
For any $x\in Q_0$ and $1\le i\neq j\le d+1$ satisfying $x+f_i,x+f_i+f_j\in Q_0$, let 
\[r_{x,i,j}:=\left\{\begin{array}{ll}
(x\xrightarrow{a_{x,i}}x+f_i\xrightarrow{a_{x+f_i,j}}x+f_i+f_j)-(x\xrightarrow{a_{x,j}}x+f_j\xrightarrow{a_{x+f_j,i}}x+f_i+f_j)&\mbox{if}\ x+f_j\in Q_0,\\
(x\xrightarrow{a_{x,i}}x+f_i\xrightarrow{a_{x+f_i,j}}x+f_i+f_j)&\mbox{otherwise}.
\end{array}
\right.\]
We denote by $\check{Q}^{(d,n)}$ the quiver obtained from $Q^{(d,n)}$ by removing all arrows of the form $a_{x,d+1}$.
We define the \emph{higher Auslander algebra of type $A_n$} by
\[\Lambda^{(d,n)}:=k\check{Q}^{(d,n)}/\check{I}^{(d,n)},\]
where $\check{I}^{(d,n)}$ is the ideal generated by all relations $r_{x,i,j}$ satisfying $i,j\neq d+1$. Then we have an isomorphism 
 \[\Lambda^{(d,n)}\simeq\Pi^{(d,n)}/\overline{I}^{(d,n)},\]
where $\overline{I}^{(d,n)}$ is the ideal generated by all arrows of the form $a_{x,d+1}$. Let
\[\Psi^{(d,n)}:=kQ^{(d,n)}/J^{(d,n)},\]
where $J^{(d,n)}$ is the ideal generated by all $r_{x,i,j}$, except for $r_{x,i,d+1}$ satisfying $x_{d+1}=0$. 

\begin{proposition}
For positive integers $d$ and $n$, the following assertions hold.
\begin{enumerate}[\rm(a)]
\item $\Lambda^{(d,n)}$ is $d$-representation finite algebra whose $d$-Auslander algebra is $\Lambda^{(d+1,n)}$.
\item The $(d+1)$-preprojective algebra of $\Lambda^{(d,n)}$ is $\Pi^{(d,n)}$.
\item The total $(d+1)$-preprojective algebra of $\Lambda^{(d,n)}$ is $\Psi^{(d+1,n)}$.
\end{enumerate}
\end{proposition}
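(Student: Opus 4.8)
The plan is to verify each of the three assertions by reducing to the general results already established in the paper, together with an explicit verification that the quiver-with-relations descriptions match. For part (a), the statement that $\Lambda^{(d,n)}$ is $d$-representation-finite with $d$-Auslander algebra $\Lambda^{(d+1,n)}$ is the content of Iyama's original construction of the higher Auslander algebras of type $A$, so I would cite \cite{I} (and the explicit combinatorial model in the cited literature) rather than reprove it; the one thing worth spelling out is that our $\check{Q}^{(d,n)}$ with the relations $\check{I}^{(d,n)}$ does give the standard presentation, which is a routine matching of conventions (the simplex $\{x\in\Z_{\ge0}^{d+1}\mid\sum x_i=n-1\}$ indexing $d$-cluster tilting objects, with the arrows $a_{x,i}$ recording the $d$ admissible "moves"). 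The $d$-cluster tilting $\Lambda^{(d,n)}$-module $\Pi$ is then the one whose indecomposable summands are indexed by $Q_0^{(d,n)}$.

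For part (b), I would invoke the general description of the $(d+1)$-preprojective algebra $\Pi(\Lambda)=T_\Lambda(\Ext_\Lambda^d(D\Lambda,\Lambda))$. Since $\Lambda^{(d,n)}$ is $d$-representation-finite, $\Pi(\Lambda^{(d,n)})$ as a $\Lambda$-module is the additive generator of the $d$-cluster tilting subcategory, i.e. $\bigoplus_{x\in Q_0^{(d,n)}}$ of the corresponding indecomposables, so its quiver has vertex set $Q_0^{(d,n)}$. The extra arrows appearing when one passes from the $d$-Auslander algebra $\Lambda^{(d+1,n)}=k\check{Q}^{(d+1,n)}/\check{I}^{(d+1,n)}$ to the $(d+1)$-preprojective algebra are exactly the arrows $a_{x,d+1}$, which encode the inverse $d$-Auslander--Reiten translation $\tau_d^-$; and the new relations are the mesh-type relations $r_{x,i,d+1}$. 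This is precisely what the displayed isomorphism $\Pi^{(d,n)}\simeq kQ^{(d,n)}/I^{(d,n)}$ asserts, and it can be checked directly: $\check Q^{(d,n)}$ is $Q^{(d,n)}$ with the arrows $a_{x,d+1}$ removed, and $\overline{I}^{(d,n)}$ kills exactly those arrows, so $\Lambda^{(d,n)}\simeq\Pi^{(d,n)}/\overline{I}^{(d,n)}$, matching the identification of $\Lambda$ as a quotient of its $(d+1)$-preprojective algebra by the arrows of degree $1$.

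For part (c), I would apply Theorem \ref{mainpresentation}: the total $(d+1)$-preprojective algebra of $\Lambda^{(d,n)}$ is $k\widetilde{Q}_\Gamma/(I_\Gamma,r_a)$, where $\Gamma=\Lambda^{(d+1,n)}$ is the $d$-Auslander algebra from (a), with quiver $\check Q^{(d+1,n)}$, and where one adds an arrow $q_X:\tau_d^-(X)\to X$ for each indecomposable non-injective summand $X$ of $\Pi$. The key combinatorial identification is: under the labelling of summands of $\Pi$ (equivalently the $d$-cluster tilting objects of $\Lambda^{(d,n)}$) by $Q_0^{(d+1,n)}$, the indecomposable non-injective ones are exactly those $x$ with $x_{d+2}=0$ — wait, more precisely by the shift in index from $d$ to $d+1$, the vertices of $\widetilde Q_\Gamma$ are $Q_0^{(d+1,n)}$, the arrows $q_X$ correspond exactly to the arrows $a_{x,d+2}$ of $Q^{(d+1,n)}$, and $\tau_{d+1}^-$ on the non-injective side is realized by $x\mapsto x+f_{d+2}$. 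Then $I_\Gamma=\check I^{(d+1,n)}$ is generated by $r_{x,i,j}$ with $i,j\le d+1$, the relations $r_a$ of Theorem \ref{mainpresentation} become exactly the relations $r_{x,i,d+2}$ with the case distinction (non-injective/injective) matching whether $x+f_j\in Q_0^{(d+1,n)}$, i.e. whether $x_{d+2}=0$; and together these are precisely the generators of $J^{(d+1,n)}$, namely all $r_{x,i,j}$ except $r_{x,i,d+2}$ with $x_{d+2}=0$. Hence $\Psi(\Lambda^{(d,n)})\simeq kQ^{(d+1,n)}/J^{(d+1,n)}=\Psi^{(d+1,n)}$.

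The main obstacle is the bookkeeping in part (c): one must carefully match the abstract recipe of Theorem \ref{mainpresentation} (add an arrow $q_X$ per non-injective $X$, impose $r_a=aq_X-q_Y\tau_d^-(a)$ or $aq_X$ or $0$) against the concrete combinatorics of the type-$A$ simplex quivers, in particular verifying that "$X$ is a non-injective summand of $\Pi$" translates to "$x_{d+2}=0$" and that $\tau_{d+1}^-$ acts as translation by $f_{d+2}$, including the edge cases near the boundary of the simplex where $x+f_i+f_{d+2}$ may leave $Q_0$. This is the crux; once the dictionary between the $q_X$ and the arrows $a_{x,d+2}$ is pinned down, parts (a), (b), (c) become formal consequences of the respective general theorems. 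I would organize the proof so that this dictionary is established once, and then invoked for both (b) and (c).
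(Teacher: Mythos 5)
Your overall route is the same as the paper's: parts (a) and (b) are quoted from the literature (\cite{I2,IO}), and part (c) is obtained by feeding $\Gamma=\Lambda^{(d+1,n)}$ into Theorem \ref{mainpresentation} and matching the added arrows and relations against $J^{(d+1,n)}$. The problem is that the combinatorial dictionary you yourself single out as the crux is stated incorrectly, and as written it would derail the verification. Under the identification of the indecomposable summands of $\Pi$ with the vertices of $\check{Q}^{(d+1,n)}$, the injective $\Lambda^{(d,n)}$-modules are the vertices with $x_1=0$ and the projective ones are those with $x_{d+2}=0$; so ``non-injective'' translates to $x_1\neq 0$, not to $x_{d+2}=0$ as you assert (twice). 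With your dictionary even the number of added arrows comes out wrong: for $\Lambda^{(2,3)}$ there are four non-injective summands ($x_1\ge 1$ among the ten vertices of $Q^{(3,3)}$), matching the four arrows $q_{P_1},q_{P_2},q_{P_4},q_{S_4}$ in the paper's example, whereas $x_4=0$ picks out six vertices. Likewise the relevant translation is $\tau_d^-$ (not $\tau_{d+1}^-$), and it acts by $\tau_d^-(x)=x-f_{d+2}$, so that $q_x\colon x-f_{d+2}\to x$ is identified with $a_{x-f_{d+2},d+2}$; your ``$x\mapsto x+f_{d+2}$'' is the opposite direction and is incompatible with your own injectivity criterion, since for $x_{d+2}=0$ the point $x+f_{d+2}$ leaves the simplex.

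The matching of relations also requires the case analysis you defer, and your stated criterion conflates two different dichotomies. For an arrow $a=a_{x+f_{d+2},i}\colon X\to Y$ with $X=x+f_{d+2}$ non-injective, $r_a$ is the commutator $aq_X-q_Y\tau_d^-(a)=\pm r_{x,d+2,i}$ exactly when $x+f_i\in Q^{(d+1,n)}_0$ (equivalently $Y$ is non-injective), and the monomial $aq_X=r_{x,d+2,i}$ when $x+f_i\notin Q^{(d+1,n)}_0$ (equivalently $Y=x+f_i+f_{d+2}$ is injective). The condition $x_{d+2}=0$, i.e.\ $x+f_{d+2}\notin Q^{(d+1,n)}_0$, plays a different role: it governs which relations must be \emph{excluded} from $J^{(d+1,n)}$, because in that case $r_{x,i,d+2}=a_{x+f_i,d+2}a_{x,i}$ is a path of the form $q_Y\tau_d^-(a)$ with no corresponding arrow $a$ in $Q_\Gamma$, and Theorem \ref{mainpresentation} imposes no such relation. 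In particular, the monomial relations $aq_X$ with injective target only appear in the family $r_{x,d+2,i}$ (with $x+f_i\notin Q^{(d+1,n)}_0$), not among the $r_{x,i,d+2}$, so matching ``the $r_a$'s'' only against $r_{x,i,d+2}$ misses them. The paper settles all of this by the explicit four-case computation of $r_{x,d+2,i}$; your proposal needs that computation, with the corrected dictionary ($x_1=0$ for injectives, $x_{d+2}=0$ for projectives, $\tau_d^-(x)=x-f_{d+2}$), before it is a proof.
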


\begin{proof}
(a)(b) See \cite{I2,IO}.

(c) Let $\Lambda:=\Lambda^{(d,n)}$. By (a) and (b), the $d$-Auslander-algebra of $\Lambda$ is $\Gamma:=\Lambda^{(d+1,n)}$, and the $(d+1)$-preprojective algebra of $\Lambda$ is $\Pi:=\Pi^{(d,n)}$. Let $\Psi$ be the total $(d+1)$-preprojective algebra of $\Lambda$.

The quiver of the $d$-cluster tilting subcategory $\add\Pi$ of $\mod\Lambda$ is given by $\check{Q}^{(d+1,n)}$. We consider the $d$-Auslander-Reiten translation $\tau_d^-:\add\Pi\to\add\Pi$.
We identify the vertices of $\check{Q}^{(d+1,n)}$ with indecomposable modules of $\add\Pi$. Then the indecomposable injective (respectively, projective) $\Lambda$-modules are the vertices $x$ satisfying $x_1=0$ (respectively, $x_{d+2}=0$), and moreover $\tau_d^-(x)=x-f_{d+2}$ holds for each indecomposable non-injective object $x$ in $\add\Pi$.
By Theorem \ref{mainpresentation}, the quiver of $\Psi$ is given by adding new arrows $q_x:x-f_{d+2}\to x$ to $\check{Q}^{(d+1,n)}$, which is precisely $Q^{(d+1,n)}$ by identifying $q_x$ with $a_{x-f_{d+2},d+2}$.

The relations $r_{x,i,j}$ when both $i,j\neq d+1$ are the generating relations of $\Lambda^{(d+1,n)}$.  Hence, to obtain the desired isomorphism $\Psi\simeq kQ^{(d,n)}/J^{(d,n)}$, it remains to show that the relations $r_{x,d+2,i}$ for all $x$ with $x_{d+2}\neq 0$ coincide with the $r_a$'s of Theorem \ref{mainpresentation}.
Indeed, first note that
\[
r_{x,d+2,i} = \begin{cases}
a_{x+f_{d+2},i}a_{x,d+2}-a_{x+f_i,d+2}a_{x,i}, & \text{if }x+f_{d+2}, x+f_i\in Q^{(d+1,n)},\\
a_{x+f_{d+2},i}a_{x,d+2}, & \text{if }x+f_i\notin Q^{(d+1,n)}, x+f_{d+2}\in Q^{(d+1,n)},\\
-a_{x+f_i,d+2}a_{x,i}, & \text{if }x+f_i\in Q^{(d+1,n)}, x+f_{d+2}\notin Q^{(d+1,n)},\\
0, & \text{otherwise.}
\end{cases}
\]
We need to exclude the third case, i.e. when $x+f_{d+2}\notin Q^{(d+1,n)}$, which is equivalent to having $x_{d+2}=0$.
Now $x+f_{d+2}\in Q^{(d+1,n)}$ is equivalent to $x+f_{d+2}$ being non-injective.
If $x+f_i+f_{d+2}\in Q^{(d+1,n)}$, then $x+f_i \notin Q^{(d+1,n)}$ is equivalent to $x+f_i+f_{d+2}$ being injective.  
Hence, the first and second case above gives all the $r_a$'s of Theorem \ref{mainpresentation} with $a=a_{x+f_{d+2},i}$.

\end{proof}

We give an explicit example.
Let $\Lambda=\Lambda^{(2,3)}$ be the Auslander algebra of the $A_3$ quiver $k Q^{(1,3)}=\Lambda^{(1,3)}$. Then $\Lambda$ is given by the following quiver-and-relations:
\[
\Lambda^{(2,3)}: \vcenter{\xymatrix@R=0.4cm@C=0.7cm{&& 3\ar[rd]|{d} && \\ &2\ar[rd]|{c}\ar[ru]|{b}&&5\ar[rd]|{f} \ar@{.}[ll]& \\ 1\ar[ru]|{a}&&4\ar[ru]|{e} \ar@{.}[ll] &&6 \ar@{.}[ll] }}
\]
Here, the dotted lines represent the mesh relations $m_1=ca, m_4=fe, m_2=db-ec$. 

Let $P_i$, $S_i$ and $I_i$ be projective, simple and injective $\Lambda$-modules corresponding to the vertex $i$ respectively.
Then the $2$-cluster tilting $\Lambda$-module is given by $\Lambda\oplus\tau_2^{-1}(\Lambda)\oplus\tau_2^{-2}(\Lambda)$, whose basic part is given by $P_1\oplus P_2\oplus P_3\oplus P_4\oplus P_5\oplus P_6\oplus S_4\oplus I_4\oplus I_5\oplus I_6$.
Note that $P_3\cong I_2, P_5\cong I_2, P_6\cong P_3$.
Then the $2$-Auslander algebra $\Lambda^{(3,3)}$ and the total $3$-preprojective algebra $\Psi^{(3,3)}$ of $\Lambda=\Lambda^{(2,3)}$ can be presented as follows.
\begin{center}
\begin{tikzpicture}[scale=0.7]
\tikzset{
  sn/.style={fill=white,font=\footnotesize,inner sep=1pt},
}
\node (v1) at (0,0) {$P_1$};
\node (v2) at (2,1.5) {$P_2$};
\node (v3) at (4,3) {$I_1$};
\node (v4) at (4,0) {$P_4$};
\node (v5) at (6,1.5) {$I_2$};
\node (v6) at (8,0) {$I_3$};
\node (v9) at (2,-3) {$S_4$};
\node (v7) at (4,-1.5) {$I_4$};
\node (v8) at (6,-3) {$I_5$};
\node (v10) at (4,-6) {$I_6$};
\draw[dotted]  (v9) -- (v1);
\draw[dotted]  (v8) -- (v4);
\draw[dotted]  (v7) -- (v2);
\draw[dotted]  (v10) -- (v9);
\tikzset{
  every path/.append style = {->, inner sep=2pt},
}
\draw  (v1) to node[sn] {$a$} (v2);
\draw  (v2) to node[sn] {$b$} (v3);
\draw  (v2) to node[sn] {$c$} (v4);
\draw  (v3) to node[sn] {$d$} (v5);
\draw  (v4) to node[sn] {$e$} (v5);
\draw  (v5) to node[sn] {$f$} (v6);
\draw  (v7) to node[sn] {$f'$} (v8);
\draw  (v9) to node[sn] {$e'$} (v7);
\draw  (v5) to node[sn] {$h$} (v7);
\draw  (v6) to node[sn] {$i$} (v8);
\draw  (v8) to node[sn] {$j$} (v10);
\draw  (v4) to node[sn] (a6) {$g$} (v9);
\node at (0.5,3) {$\Lambda^{(3,3)}:$};
\end{tikzpicture}\ \ \ \ \ 
\begin{tikzpicture}[scale=0.7]
\tikzset{
  sn/.style={fill=white,font=\footnotesize,inner sep=1pt},
  every path/.append style = {->, inner sep=2pt},
}
\filldraw[draw=white,fill=red!50!white,opacity=.5,inner sep=2pt] (0,0) -- (2,1.5) -- (4,-1.5) -- (2,-3) -- cycle;
\filldraw[draw=white,very thick,fill=green!50!white,opacity=.5] (4,0) -- (2,1.5) -- (4,-1.5) -- (6,-3) ;
\filldraw[draw=white,very thick,fill=yellow!90!white,opacity=.5] (4,0) -- (2,-3) -- (4,-6) -- (6,-3) ;
\node (v1) at (0,0) {$P_1$};
\node (v2) at (2,1.5) {$P_2$};
\node (v3) at (4,3) {$I_1$};
\node (v4) at (4,0) {$P_4$};
\node (v5) at (6,1.5) {$I_2$};
\node (v6) at (8,0) {$I_3$};
\node (v9) at (2,-3) {$S_4$};
\node (v7) at (4,-1.5) {$I_4$};
\node (v8) at (6,-3) {$I_5$};
\node (v10) at (4,-6) {$I_6$};
\draw[-,dotted,thick,blue] (v3) .. controls +(-2,-1.8) and +(-1.3,2.5) .. (v7);
\draw[-,dotted,thick,blue] (v7) .. controls +(-2,-1.8) and +(-1.3,2.5) .. (v10);
\draw[-,dotted,thick,blue] (v5) .. controls +(-2,-1.8) and +(-1.3,2.5) .. (v8);
\draw  (v1) to node[sn] {$a$} (v2);
\draw  (v2) to node[sn] {$b$} (v3);
\draw  (v2) to node[sn] {$c$} (v4);
\draw  (v3) to node[sn] {$d$} (v5);
\draw  (v4) to node[sn] {$e$} (v5);
\draw  (v5) to node[sn] {$f$} (v6);
\draw  (v7) to node[sn] {$f'$} (v8);
\draw  (v9) to node[sn] {$e'$} (v7);
\draw  (v5) to node[sn,xshift={10}] {$h$} (v7);
\draw  (v6) to node[sn] {$i$} (v8);
\draw  (v8) to node[sn] {$j$} (v10);
\draw  (v4) to node[sn] {$g$} (v9);
\draw[blue]  (v9) to node[sn,xshift={-10}] {$q_{P_1}$} (v1);
\draw[blue]  (v8) to node[sn,xshift={15}] {$q_{P_4}$} (v4);
\draw[blue]  (v7) to node[sn,xshift={-10}] {$q_{P_2}$} (v2);
\draw[blue]  (v10) to node[sn,xshift={-10}] {$q_{S_4}$}  (v9);
\node at (0.5,3) {$\Psi^{(3,3)}:$};
\end{tikzpicture}
\end{center}
The relations of $\Lambda^{(3,3)}$ are given by commutation of the rectangular faces and the quadratic monomial at the boundary (such as $ca$ and $gc$).
For $\Psi^{(3,3)}$, the relations are given by the blue dotted lines, which represent the quadratic monomials $r_b=bq_{P_2}, r_e=eq_{P_4}, r_{e'}=e'q_{S_4}$,   
and the coloured faces, which represent the three commutations relations $r_a=aq_{P_1}-q_{P_2}e', r_c=cq_{P_2}-q_{P_4}f', r_g=gq_{S_4}-q_{P_4}j$.

\subsection{The quadratic dual of the stable Auslander algebra of type $D_4$}

The last example comes from the quadratic dual of stable Auslander algebras of Dynkin quivers \cite{M}. For details we refer to \cite{CIM2}.
Let $\Lambda$ be the algebra given by the quiver
\[\begin{tikzcd}
	&2 && 6 \\
	1 & 3 & 5 & 7 \\
	&4 && 8
	\arrow["d"{description}, from=1-2, to=2-3]
	\arrow["g"{description}, from=2-3, to=1-4]
	\arrow["h"{description}, from=2-3, to=2-4]
	\arrow["i"{description}, from=2-3, to=3-4]
	\arrow["e"{description}, from=2-2, to=2-3]
	\arrow["a"{description}, from=2-1, to=1-2]
	\arrow["b"{description}, from=2-1, to=2-2]
	\arrow["c"{description}, from=2-1, to=3-2]
	\arrow["f"{description}, from=3-2, to=2-3]
\end{tikzcd}\]
with relations $da-eb, da-fc, gd, hd, ge, ie, hf, if$.
Then $\Lambda$ is $3$-representation-finite with $3$-cluster tilting module $M=D(\Lambda) \oplus \tau_3(D(\Lambda))$.
The $3$-Auslander algebra $\Gamma=\End_\Lambda(M)$ has the quiver
\[\begin{tikzpicture}[scale=0.9]
\tikzset{
 every node/.append style = {fill=white},
  every path/.append style = {->,inner sep=2pt},
}
\node (v1) at (0,0) {$P_1$};
\node (v2) at (1.5,1.5) {$P_2$};
\node (v3) at (1.5,0) {$P_3$};
\node (v4) at (1.5,-1.5) {$P_4$};
\node (v5) at (3,0) {$P_5$};
\node (v6) at (4.5,1.5) {$P_6$};
\node (v7) at (4.5,0) {$P_7$};
\node (v8) at (4.5,-1.5) {$P_8$};
\node (v9) at (6,0) {$I_5$};
\node (v10) at (7.5,1.5) {$I_8$};
\node (v11) at (7.5,0) {$I_7$};
\node (v12) at (7.5,-1.5) {$I_6$};
\draw  (v1) to node {{\scriptsize$a$}} (v2) ;
\draw  (v1) to node {{\scriptsize$b$}} (v3);
\draw  (v1) to node {{\scriptsize$c$}} (v4);
\draw  (v2) to node {{\scriptsize$d$}} (v5);
\draw  (v3) to node {{\scriptsize$e$}} (v5);
\draw  (v4) to node {{\scriptsize$f$}} (v5);
\draw  (v5) to node {{\scriptsize$g$}} (v6);
\draw  (v5) to node {{\scriptsize$h$}} (v7);
\draw  (v5) to node {{\scriptsize$i$}} (v8);
\draw  (v6) to node {{\scriptsize$d'$}} (v9);
\draw  (v7) to node {{\scriptsize$e'$}} (v9);
\draw  (v8) to node {{\scriptsize$f'$}} (v9);
\draw  (v9) to node {{\scriptsize$g'$}} (v10);
\draw  (v9) to node {{\scriptsize$h'$}} (v11);
\draw  (v9) to node {{\scriptsize$i'$}} (v12);
\end{tikzpicture}\]
with relations $da-eb, da-fc, gd, hd, ge, ie, hf, if, d'g-e'h, d'g-f'i, g'd', h'd', g'e', i'e', h'f', i'f'$.
The total $4$-preprojective algebra $\Psi$ has the quiver
\[\begin{tikzpicture}[scale=0.9]
\tikzset{
 every node/.append style = {fill=white},
  every path/.append style = {->,inner sep=2pt},
}
\node (v1) at (0,0) {$P_1$};
\node (v2) at (1.5,1.5) {$P_2$};
\node (v3) at (1.5,0) {$P_3$};
\node (v4) at (1.5,-1.5) {$P_4$};
\node (v5) at (3,0) {$P_5$};
\node (v6) at (4.5,1.5) {$P_6$};
\node (v7) at (4.5,0) {$P_7$};
\node (v8) at (4.5,-1.5) {$P_8$};
\node (v9) at (6,0) {$I_5$};
\node (v10) at (7.5,1.5) {$I_8$};
\node (v11) at (7.5,0) {$I_7$};
\node (v12) at (7.5,-1.5) {$I_6$};
\draw  (v1) to node {{\scriptsize$a$}} (v2) ;
\draw  (v1) to node {{\scriptsize$b$}} (v3);
\draw  (v1) to node {{\scriptsize$c$}} (v4);
\draw  (v2) to node {{\scriptsize$d$}} (v5);
\draw  (v3) to node {{\scriptsize$e$}} (v5);
\draw  (v4) to node {{\scriptsize$f$}} (v5);
\draw  (v5) to node {{\scriptsize$g$}} (v6);
\draw  (v5) to node {{\scriptsize$h$}} (v7);
\draw  (v5) to node {{\scriptsize$i$}} (v8);
\draw  (v6) to node {{\scriptsize$d'$}} (v9);
\draw  (v7) to node {{\scriptsize$e'$}} (v9);
\draw  (v8) to node {{\scriptsize$f'$}} (v9);
\draw  (v9) to node {{\scriptsize$g'$}} (v10);
\draw  (v9) to node {{\scriptsize$h'$}} (v11);
\draw  (v9) to node {{\scriptsize$i'$}} (v12);
\draw[blue]  (v10) to[bend right=20] node {$q_2$} (v2);
\draw[blue]  (v9) to[bend left=20] node {$q_3$} (v1);
\draw[blue]  (v12) to[bend left=20] node {$q_1$} (v4);
\draw[blue]  (v11) to[bend right=20] node {$q_4$} (v3);
\end{tikzpicture}\]
with additional relations $r_a = aq_1-q_2g', r_b = bq_3-q_5e', r_c=cq_1-q_4i', r_d=dq_2, r_e=eq_3, r_f=fq_4.$


\end{document}